\newcommand{\norm}[1]{\left\lVert#1\right\rVert}
 \newtheorem{theorem}{Theorem}[section]
 \newtheorem{corollary}[theorem]{Corollary}
 \newtheorem{lemma}[theorem]{Lemma}
 \newtheorem{proposition}[theorem]{Proposition}
 \theoremstyle{definition}
 \newtheorem{definition}[theorem]{Definition}
 \theoremstyle{remark}
 \newtheorem{remark}[theorem]{Remark}
 \numberwithin{equation}{section}
\def\N{{\mathbb N}}
\def\F{{\mathbb F}}
\def\Z{{\mathbb Z}}
\def\C{{\mathbb C}}
\def\R{{\mathbb R}}
\def\Span{{\rm Span}\ }
\def\PSL2{PSL_2 (\mathbb{Z})}
\def\SL2{SL_2 (\mathbb{Z})}
\def\lbrack{\left[}
\def\rbrack{\right]}
\def\ad{{\rm ad}\ }
\def\indset{\mathcal{I}}
\def\indsetL4{\indset^*}
\def\Basis5{\mathcal{B}}
\def\Reps5{\mathcal{S}}
\def\freeassoc3{\F\left< A,B,C\right>}
\def\algebH{\mathcal{H}}
\def\Heisen{\algebH(q)}
\def\HLie{\mathfrak{L}}
\def\HeisenLie{\HLie(q)}
\def\preLie{\HLie_0(q)}
\def\approxLie{\mathfrak{E}(q)}
\def\derpreLie{\HLie_0^{(2)}(q)}
\def\LieAB{\lbrack A,B\rbrack}
\def\algI{I}
\def\baseG{\gamma}
\def\HilbSeq{\ell_2}
\def\Hilb{\mathfrak{H}}
\def\HilbD{\Hilb_0}
\def\basvecHilb{\Phi}
\def\newHA{a}
\def\newHB{a^{+}}
\def\preHeisen{\mathcal{K}}
\def\OpAlg{\mathfrak{B}(\Hilb)}
\def\commdiag{\omega_n}
\def\commdiagS{\left(\commdiag\right)}
\def\lket{\left|}
\def\rket{\right>}
\begin{document}

%-------------------------------------------------------------------------
% editorial commands: to be inserted by the editorial office
%
%\firstpage{1} \volume{228} \Copyrightyear{2004} \DOI{003-0001}
%
%
%\seriesextra{Just an add-on}
%\seriesextraline{This is the Concrete Title of this Book\br H.E. R and S.T.C. W, Eds.}
%
% for journals:
%
%\firstpage{1}
%\issuenumber{1}
%\Volumeandyear{1 (2004)}
%\Copyrightyear{2004}
%\DOI{003-xxxx-y}
%\Signet
%\commby{inhouse}
%\submitted{March 14, 2003}
%\received{March 16, 2000}
%\revised{June 1, 2000}
%\accepted{July 22, 2000}
%
%
%
%---------------------------------------------------------------------------
%Insert here the title, affiliations and abstract:
%

\title[A $q$-deformed Heisenberg algebra as a normed space]
 {A $q$-deformed Heisenberg algebra as a normed space}

%----------Author 1
\author[Rafael Reno S. Cantuba]{Rafael Reno S. Cantuba}

\address{%
Mathematics and Statistics Department\\
De La Salle University, Manila\\
2401 Taft Ave., Malate, Manila\\
1004 Metro Manila\\
Philippines}

\email{rafael\_cantuba@dlsu.edu.ph}

%\thanks{This work was completed with the support of our \TeX-pert.}
%----------Author 2

%----------classification, keywords, date
\subjclass{Primary 47B37; Secondary 47B47, 47L30, 46H70}

\keywords{unilateral weighted shift, Calkin algebra, compact operator, commutator, $q$-oscillator, $q$-deformed Heisenberg algebra, commutator algebra}

\date{June 13, 2019}
%----------additions
%\dedicatory{To my boss}
%%% ----------------------------------------------------------------------

\begin{abstract}
Given a real number $q$ such that $0<q<1$, the natural setting for the mathematics of a $q$-oscillator is an infinite-dimensional, separable Hilbert space that is said to provide an interpolation between the Bargmann-Segal space of holomorphic functions and the Hardy-Lebesgue space of analytic functions. The traditional basis states are interrelated by the creation and annihilation operators. Since the commutation relation is $q$-deformed, the commutator algebra for the creation and annihilation operators is not a low-dimensional Lie algebra like that for the canonical commutation relation. In this study, a characterization of the elements of the said commutator algebra is obtained using spectral properties of the creation and annihilation operators as these faithfully represent the generators of a $q$-deformed Heisenberg algebra. The derived algebra of the commutator algebra is precisely the set of all compact operators, and the resulting Calkin algebra is algebraically isomorphic to the complex algebra of Laurent polynomials in one indeterminate. As for any operator that is not in the commutator algebra, the action of such an operator on an arbitrary basis state can be approximated by a Lie series of elements from the commutator algebra. 
\end{abstract}

%%% ----------------------------------------------------------------------
\maketitle
%%% ----------------------------------------------------------------------
%\tableofcontents
\section{Introduction} Consider a real number $q\in\left]0,1\right[$, and let $\Hilb=\Hilb_q$ denote an infinite dimensional separable Hilbert space over the complex field $\C$. This Hilbert space depends on the parameter $q$ in the sense that if we fix a complete orthonormal basis for some dense subset of $\Hilb$, which consists of the usual ket vectors $\lket n\rket$, where $n$ ranges over all nonnegative integers, then such basis elements are generated from the basis element $\lket 0\rket$ by the \emph{creation operator} $\newHB$, (i.e., $\newHB\lket n\rket =c_n\lket n+1\rket $ for some nonzero scalar $c_n$ for any $n$), and that the operator $\newHB$ has an adjoint $\newHA$ called the \emph{annihilation operator}, and that these two operators satisfy the \emph{$q$-deformed commutation relation}

\begin{eqnarray}\label{qComm}
\newHA\newHB-q\newHB\newHA = 1.
\end{eqnarray}
The Hilbert space $\Hilb$ is the natural setting for the mathematics of the \emph{$q$-deformed quantum harmonic oscillator} or simply \emph{$q$-oscillator}, about which the literature is vast. We mention but only a few seminal ones. One of the main theoretical developments is the perspective that $\Hilb$ provides a rather
remarkable interpolation between the Bargmann-Segal
space and the Hardy-Lebesgue space. This was developed in one of the early seminal papers about $q$-oscillators which is \cite{Ari76}. By a suitable choice of an inner product and by a limiting process such that $q\rightarrow 1$, the Hilbert space $\Hilb$ becomes the Bargmann-Segal space of holomorphic functions in several complex variables that satisfy a square-integrability condition with respect to some Lebesgue measure. If $q\rightarrow 0$, by some other choice of an inner product, $\Hilb$ becomes the Hardy-Lebesgue space of all analytic functions in the unit disk.

Results independently developed by Macfarlane \cite{Mac89} and Biedenharn \cite{Bie89} are also seminal in the theoretical development of the $q$-oscillator, hence the so-called ``Macfarlane-Biedenharn $q$-oscillator," from which other $q$-oscillators were formulated. Among many physical and mathematical applications, the significance of the $q$-oscillator is that, together with the deformed quantum group $SU_q(2)$, these two mathematical constructions were once considered as possible candidates for the role of oscillator and angular momentum, respectively, at very small scales, such as the Planck scale \cite[p. 1155]{Zhe91}. 

Since $q\neq 1$, the left-hand side of \eqref{qComm} is not the commutator $\lbrack \newHA,\newHB\rbrack:=\newHA\newHB-\newHB\newHA$, and furthermore this operator and its positive powers have nontrivial action on the ket basis vectors, as we shall later see. The motivation for this work is the simple question of what is the smallest linear space of operators that contains the creation and annihilation operators and is closed under the commutation operation on operators. This linear space consists of linear combinations of nested commutators in the operators $\newHA$ and $\newHB$. Also, this linear space may also be termed the \emph{commutator algebra} of the $q$-oscillator, and we call the said linear combinations as \emph{Lie polynomials} in $\newHA$, $\newHB$. Let us call the problem of characterizing the Lie polynomials in $\newHA$, $\newHB$ as a \emph{Lie polynomial characterization problem} for the algebra generated by $\newHA$, $\newHB$.

The solution to this kind of a problem turns out to be rather simple if the commutation relation were not $q$-deformed. From the canonical commutation relation
\begin{eqnarray}
\lbrack\newHA,\newHB\rbrack = 1,\label{classicComm}
\end{eqnarray}
the operators $\newHA$ and $\newHB$ generate the \emph{Heisenberg-Weyl algebra}, which has the property that, using results from the theory of the classification of low-dimensional Lie algebras, the set of all Lie polynomials in $\newHA$, $\newHB$ is the \emph{Heisenberg (Lie) algebra}, which is the three-dimensional Lie algebra whose derived (Lie) algebra is contained in the center. The operators $\newHB$, $\newHA$ and $1$ form a basis for the commutator algebra \cite[Section 3.2.1]{Erd06}.

Since the Lie polynomial characterization problem for the creation and annihilation operators of the $q$-oscillator is nontrivial, we shall need an abstraction of the algebra generated by the creation and annihilation operators. This is the $q$-deformed Heisenberg algebra, a mathematical abstraction of the $q$-deformed commutation relation. Most of the algebraic aspects of this algebra have been well-studied \cite{Hel,Hel05,Lau}. The $q$-deformed Heisenberg algebra has a wide range of applications and relationships to various areas of mathematics \cite[Section 1.2]{Hel}. In this work, we show how Operator Theory, and in particular, the spectral characteristics of a unilateral weighted shift can be used to solve the said Lie polynomial characterization problem. In particular, we show that compact operators generated by $\newHA$, $\newHB$ is precisely the derived (Lie) algebra of the set of all Lie polynomials in $\newHA$, $\newHB$. We also obtain a result which states that the resulting Calkin algebra (the quotient algebra modulo the ideal of all compact operators) is algebraically isomorphic to the algebra over the complex field of all Laurent polynomials in one indeterminate. If an operator generated from the creation and annihilation operators is not in the commutator algebra, we have a result which states that the action of such an operator on an arbitrary basis element $\lket n\rket$ of $\Hilb$ can be approximated by a Lie series, or a power series of Lie polynomials, in $\newHA$, $\newHB$. 

The algebraic framework is laid out in Section~\ref{prelSec}. The involvement of Operator Theory starts in Section~\ref{HilbRepSec}, where we establish that under certain conditions, the $q$-deformed Heisenberg algebra is an algebra of Hilbert space operators, and we determine the resulting Calkin algebra. Finally, in Section~\ref{ApproxSec}, we identify which linear subspace of the $q$-deformed Heisenberg algebra is precisely the set of all Lie polynomials in the creation and annihilation operators. Elements of the resulting derived algebra, which we show to be consisting of those linear combinations of nested commutators without a term consisting of a generator, can be characterized by compactness.

\section{Preliminaries}\label{prelSec}

Let $\F$ be a field, and let $\mathcal{A}$ be a unital associative algebra over $\F$. We reserve the term \emph{subalgebra} to refer to a subset of $\mathcal{A}$ that is also a unital associative algebra over $\F$ using the same operations. The term subalgebra is to occur in a couple of other contexts in this work, and so we clarify the usage of this term in the following. We turn $\mathcal{A}$ into a Lie algebra over $\F$ with Lie bracket given by $\lbrack f,g\rbrack := fg-gf$ for all $f,g\in\mathcal{A}$. So that the algebraic substructure under the associative algebra structure in $\mathcal{A}$ is distinguished from the  substructure under the Lie algebra structure, we use the term \emph{Lie subalgebra} to mean a subset of $\mathcal{A}$ that is also a Lie algebra over $\F$ under the same Lie algebra operations. We treat the terms \emph{ideal} and \emph{Lie ideal} similarly. 

\begin{definition}\label{LiegenDef}
Given $f_1,f_2,\ldots,f_n\in\mathcal{A}$, we define the \emph{Lie subalgebra of $\mathcal{A}$ generated by $f_1,f_2,\ldots,f_n$} as the smallest Lie subalgebra $\mathcal{B}$ of $\mathcal{A}$ that contains $f_1,f_2,\ldots,f_n$; i.e., if $\mathcal{C}$ is a Lie subalgebra of $\mathcal{A}$, with $f_1,f_2,\ldots,f_n\in\mathcal{C}$ and that $\mathcal{C}$ is contained in $\mathcal{B}$, then $\mathcal{B}=\mathcal{C}$.  The elements of the Lie subalgebra of $\mathcal{A}$ generated by $f_1,f_2,\ldots,f_n$ are called the \emph{Lie polynomials} in $f_1,f_2,\ldots,f_n$. Given an index set $J$, a formal sum $\sum_{m\in J} X_m$ where $X_m$ is a Lie polynomial in $f_1,f_2,\ldots,f_n$ for all $m$ is called a \emph{Lie series} in $f_1,f_2,\ldots,f_n$. 
\end{definition}

Let $\mathcal{B}$ be a subalgebra of $\mathcal{A}$, and suppose that $\mathcal{A}$ is a normed algebra with norm $\norm{\cdot}$. Suppose further that $\mathcal{A}$ is complete under $\norm{\cdot}$ (i.e., a \emph{Banach algebra} under the norm $\norm{\cdot}$). In this work, we emphasize that the use of the term subalgebra for $\mathcal{B}$ does not imply anything as to whether $\mathcal{B}$ is complete under $\norm{\cdot}$, and if $\mathcal{B}$ happens to be complete, then $\mathcal{B}$ is more appropriately called a \emph{Banach subalgebra} of $\mathcal{A}$.

Denote by $\algI$ the multiplicative identity in $\mathcal{A}$. Given any nonzero element $f$ of $\mathcal{A}$, we interpret $f^0$ as $\algI$. Given $f\in\mathcal{A}$ the linear map $\ad f:\mathcal{A}\rightarrow\mathcal{A}$ is defined by $g\mapsto\lbrack f,g\rbrack$. For the rest of the section suppose $\mathcal{A}$ is an algebra of \emph{operators} on some Hilbert space, i.e., an algebra of bounded linear transformations of the Hilbert space into itself. To avoid confusion, given $f\in\mathcal{A}$, we reserve the term \emph{adjoint} for the operator $f^*$, and we shall not use this word to refer to the map $\ad f$, which we just leave to be in symbols. Furthermore, there should no confusion between the adjoint $f^*$ of the operator $f\in\mathcal{A}$, and the complex conjugate $c^*$ of the scalar $c\in\C$. An operator $f\in\mathcal{A}$ is said to be a \emph{commutator} if there exist operators $l,r\in\mathcal{A}$ such that $f=\lbrack l,r\rbrack$. 

\subsection{Lie polynomial versus commutator}

We clarify the use of the terms Lie polynomial and commutator in the following.  Fix some operators $g_1,g_2,...,g_k\in\mathcal{A}$.  We consider three examples, the first of which is 
\begin{eqnarray}
p_1 & := & \lbrack\lbrack g_1, g_2\rbrack,\lbrack\lbrack g_2, g_1\rbrack, g_1\rbrack\rbrack,\label{p1Lie}\\
& = & \lbrack g_1g_2-g_2g_1\  ,\   g_2g_1^2 - 2g_1g_2g_1 - g_1^2g_2\rbrack .\label{p1comm}
\end{eqnarray}
Since both $g_1g_2-g_2g_1$ and $g_2g_1^2 - 2g_1g_2g_1 - g_1^2g_2$ are operators, by \eqref{p1comm}, we find that $p_1$ is a commutator. By \eqref{p1Lie}, $p_1$ can be expressed in terms of $g_1,g_2$ using only Lie algebra operations in a finite number of steps, and so $p_1$ is a Lie polynomial in $g_1,g_2,...,g_k$. Take any two nonzero operators $g,h\in\mathcal{A}$, and suppose further that $p_2:=\lbrack g,h\rbrack$ is nonzero. Clearly, $p_2$ is a commutator, but is not necessarily a Lie polynomial in $g_1,g_2,...,g_k$, for we have no information if each of $g,h$ can be expressed in terms of $g_1,g_2,...,g_k$ using only Lie algebra operations in a finite number of steps. This information, or more precisely the characterization of Lie polynomials, can be determined by results from the theory of free Lie algebras \cite{Reut}, together with algebraic relations or equations that hold in $\mathcal{A}$. However, we show in this work that by some simple spectral properties of a Hilbert space operator, we are able to characterize Lie polynomials in a particular $q$-deformed Heisenberg algebra. This is because of the interplay of an algebraic structure and a metric space structure that both exist in the algebra. Our final example is $p_3:=g_1+\lbrack g_1, g_2\rbrack+\lbrack\lbrack g_3, g_1\rbrack,g_1\rbrack$, which is a Lie polynomial in $g_1,g_2,...,g_k$, but is not necessarily a commutator. It is possible to use spectral properties of $p_3$ to determine whether $p_3$ is a commutator or not, or to use algebraic relations in $\mathcal{A}$ to prove that $p_3$ is a commutator, but we clarify that the scope of this work does not involve problems of this type. We simply study a finitely-generated Lie subalgebra of a particular $q$-deformed Heisenberg algebra, which includes  linear combinations of commutators, where each of such linear combinations is not necessarily a commutator.

\subsection{Algebraic preliminaries about the $q$-deformed Heisenberg algebra}\label{AlgSec}

Fix a $q\in\F$. The $q$-deformed Heisenberg algebra is the unital associative algebra $\Heisen$ over $\F$ generated by two elements $A,B$ satisfying the relation $AB-qBA=\algI$.  Denote the set of all nonnegative integers by $\N$, and the set of all positive integers by $\Z^+$.  In our computations, the following basis of $\Heisen$ shall be useful.

\begin{proposition}[{\cite[Corollary 4.5]{Hel05}}]\label{BasisProp}
If $q\notin\{0,1\}$, then the following vectors form a basis for $\Heisen$.

\begin{eqnarray}\label{Heisenbasis}
\LieAB^k,\quad\LieAB^k A^l,\quad B^{l}\LieAB^k, \quad (k\in\N,\   l\in\Z^+).
\end{eqnarray}
\end{proposition}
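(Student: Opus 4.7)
The plan is to reduce Proposition \ref{BasisProp} to the well-known PBW-type basis $\{B^m A^n : m,n\in\N\}$ for $\Heisen$ and then exhibit an invertible (in fact triangular) change of basis to the family \eqref{Heisenbasis}. First I would establish that $\{B^m A^n : m,n\in\N\}$ is itself a basis of $\Heisen$; this is standard and can be obtained via Bergman's Diamond Lemma applied to the single rewrite rule $AB \to qBA + \algI$ (whose only overlap ambiguity $A\cdot AB$ versus $AB\cdot B$ reduces routinely), or equivalently by producing a faithful action of $\Heisen$ on the polynomial ring $\F[x]$ in which $B$ acts by multiplication by $x$ and $A$ by a Jackson-type $q$-difference. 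Once this PBW basis is secured, every element of $\Heisen$ admits a unique expansion as a finite $\F$-linear combination of the monomials $B^m A^n$.

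Second, using $AB = qBA + \algI$, which rewrites as $\LieAB = (q-1)BA + \algI$ (and is solvable for $BA$ because $q\neq 1$), I would run a short induction starting from $AB^n = q^n B^n A + [n]_q B^{n-1}$ to establish the key expansion
\begin{eqnarray*}
(BA)^j \;=\; q^{\binom{j}{2}} B^j A^j \;+\; (\F\text{-linear combination of } B^i A^i \text{ with } 0\le i<j).
\end{eqnarray*}
Since $q\neq 0$, the leading coefficient $q^{\binom{j}{2}}$ is nonzero. Combining this with $\LieAB = (q-1)BA + \algI$, one obtains, for each $k\in\N$ and each $l\in\Z^+$,
\begin{eqnarray*}
\LieAB^k &\in& \Span\{B^j A^j : 0\le j\le k\}, \\
\LieAB^k A^l &\in& \Span\{B^j A^{j+l} : 0\le j\le k\}, \\
B^l \LieAB^k &\in& \Span\{B^{j+l} A^j : 0\le j\le k\},
\end{eqnarray*}
in each case with leading coefficient $(q-1)^k q^{\binom{k}{2}}$, which is nonzero precisely because $q\neq 0$ and $q\neq 1$.

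To conclude, I would exploit the disjoint decomposition
\begin{eqnarray*}
\N\times\N \;=\; \{(j,j) : j\in\N\} \sqcup \{(j,j+l) : j\in\N,\  l\in\Z^+\} \sqcup \{(j+l,j) : j\in\N,\  l\in\Z^+\}.
\end{eqnarray*}
The three families in \eqref{Heisenbasis} land, respectively, in the three pieces of the PBW basis, and within each piece the change-of-basis matrix is triangular in the parameter $k$ with nonzero diagonal entries, hence invertible. This simultaneously yields spanning and linear independence of \eqref{Heisenbasis}. The main obstacle I anticipate is the initial PBW step: the Diamond Lemma computation itself is mild, but one must check carefully that the hypothesis $q\neq 0$ is enough to rule out degenerate collapses of the normal form; by contrast, the subsequent leading-term induction for $(BA)^j$ is essentially a warm-up in $q$-Pochhammer manipulations.
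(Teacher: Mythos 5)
Your proposal is correct, but it reaches Proposition~\ref{BasisProp} by a genuinely different route from the paper's. The paper itself offers no proof: it imports the statement from \cite[Corollary 4.5]{Hel05}, and the closest thing to an argument in the text is the appendix, where Bergman's Diamond Lemma is applied to the \emph{three}-generator presentation of $\Heisen$ on $A,B,C=\LieAB$ with the reduction system \eqref{redsys1}--\eqref{redsys4}; there the vectors \eqref{Heisenbasis} appear directly as the irreducible words, so spanning and independence drop out in one stroke once the ambiguities are checked to be resolvable. You instead apply the Diamond Lemma to the two-generator presentation to get the PBW basis $\{B^mA^n\}$ and then pass to \eqref{Heisenbasis} by a change of basis that is block-diagonal over the strata $n-m=\mathrm{const}$ and triangular within each stratum with diagonal entries $(q-1)^kq^{\binom{k}{2}}$. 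What your version buys is transparency about where the hypotheses live: $q\neq 1$ enters through $(q-1)^k$ and $q\neq 0$ through $q^{\binom{k}{2}}$, whereas in the normal-form approach the role of $q\notin\{0,1\}$ is hidden inside the passage between the presentations (the relations \eqref{redsys1}, \eqref{redsys2} require dividing by $1-q$, and \eqref{redsys3}, \eqref{redsys4} degenerate at $q=0$). What the paper's approach buys is Proposition~\ref{diamondProp} essentially for free, since closure of the basis under multiplication is exactly what the reduction system computes. One small correction: the rewrite system with the single rule $AB\to qBA+\algI$ has \emph{no} overlap ambiguities at all --- an overlap would force the middle word $V_2$ to be simultaneously a proper nonempty prefix and suffix of $AB$, which is impossible --- so the word $AAB$ you name is not an ambiguity in Bergman's sense, the PBW step is even easier than you suggest, and in particular it does not require $q\neq 0$; your worry about ``degenerate collapses'' of the normal form at $q=0$ is unfounded, since what actually fails there is not the PBW basis but the invertibility of your triangular matrices.
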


The above basis for $\Heisen$ motivates the following definition, which plays a significant role in the main results in this work.

\begin{definition}\label{preLieDef} We define $\preLie$ as the span of
\begin{eqnarray}
A,\quad B,\quad\LieAB^k,\quad\LieAB^k A^l,\quad B^{l}\LieAB^k, \quad (k,l\in\Z^+),\label{preLiebasis}
\end{eqnarray}
and by $\approxLie$ the span of
\begin{eqnarray}
\algI,\quad A^l,\quad B^{l}, \quad (k,l\in\Z^+\backslash\{1\}).\label{approxLiebasis}
\end{eqnarray}
\end{definition}

Thus, for every $U\in\Heisen$ there exist $V_1\in\preLie$ and $V_2\in\approxLie$ such that $U=V_1+V_2$. Furthermore, $\Heisen$ is equal to the direct sum $\preLie+\approxLie$ of vector spaces. We denote by $\HeisenLie$ the Lie subalgebra of $\Heisen$ generated by $A,B$. 

\begin{proposition}\label{diamondProp} The product of any two basis elements of $\Heisen$ in \eqref{Heisenbasis} is a finite linear combination of \eqref{Heisenbasis}.
\end{proposition}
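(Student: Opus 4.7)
Strictly speaking, the claim is immediate from Proposition \ref{BasisProp}: since \eqref{Heisenbasis} is a basis of $\Heisen$, any element of $\Heisen$---in particular, any product of two basis vectors---is by definition a finite linear combination of the basis vectors. Nevertheless, because explicit product formulas will likely be needed in later sections, I would give a constructive proof that also yields them.

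The first step is to derive some auxiliary commutation relations. Writing $C:=\LieAB$, direct computation from $AB=qBA+\algI$ gives $C=(q-1)BA+\algI$, hence $BA=\tfrac{1}{q-1}(C-\algI)$, together with the ``$q$-central'' relations $AC=qCA$ and $CB=qBC$. Short inductions then upgrade these to
\[
A^i C^k \;=\; q^{ik} C^k A^i \qquad \text{and} \qquad C^k B^j \;=\; q^{jk} B^j C^k \qquad (i,j,k\in\N).
\]

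Next, I would verify the claim case-by-case on the nine ordered pairs of basis-vector types. The ``compatible'' pairs collapse at once: for example, $C^k A^l \cdot C^{k'} A^{l'} = q^{l k'} C^{k+k'} A^{l+l'}$ (pushing $A^l$ past $C^{k'}$), $B^l C^k \cdot B^{l'} C^{k'} = q^{k l'} B^{l+l'} C^{k+k'}$, and similar immediate reductions for the pairs in which only one factor is purely $C^j$. The only subtle cases are those such as $(C^k A^l)(B^{l'} C^{k'})$, in which an inner $A^l B^{l'}$ block must be straightened. For this I would use the identity
\[
AB^m \;=\; \frac{q^m}{q-1}\, B^{m-1} C \;-\; \frac{1}{q-1}\, B^{m-1} \qquad (m\geq 1),
\]
proved by induction on $m$ from $AB=qBA+\algI$ together with the formula for $BA$. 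Iterating on the exponent of $A$ then expresses $A^l B^{l'}$ as a finite linear combination of products of the form $B^i C^j A^s$, each of which reduces to an element of \eqref{Heisenbasis} after applying the relations of the previous step.

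The main obstacle is purely bookkeeping: one must verify that the iterated straightening terminates and produces only vectors from \eqref{Heisenbasis}. Termination is clear, since each application of the displayed identity strictly decreases the exponent of $B$ in the block being processed; an analogous straightening handles $B^l A^{l'}$ via $BA=\tfrac{1}{q-1}(C-\algI)$, decreasing the exponent of $A$. Once all nine cases are dealt with, the proposition follows.
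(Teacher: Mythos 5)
Your proposal is correct, and it reaches the conclusion by a genuinely different route from the paper. Your opening observation is in fact already a complete proof: Proposition~\ref{BasisProp} asserts that \eqref{Heisenbasis} is a Hamel basis of $\Heisen$, and a Hamel basis by definition expresses every element of $\Heisen$ --- in particular every product of two basis vectors --- as a \emph{finite} linear combination, so the finiteness the paper singles out as the content of the proposition comes for free. The paper instead gives a self-contained argument in the appendix via Bergman's Diamond Lemma: with $C:=\LieAB$ it sets up the reduction system $BA\mapsto\frac{\algI-C}{1-q}$, $AB\mapsto\frac{\algI-qC}{1-q}$, $AC\mapsto qCA$, $CB\mapsto qBC$, discusses resolvability of the overlap ambiguities, and concludes that a finite composition of reductions carries $fg$ onto a finite linear combination of irreducible words. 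Your constructive alternative uses exactly the same rewriting rules but applies them by hand: your identity $AB^m=\frac{q^m}{q-1}B^{m-1}C-\frac{1}{q-1}B^{m-1}$ (which checks out by the induction you indicate) is just the iterated reduction, and your explicit termination argument replaces the Diamond Lemma machinery. What your version buys is explicit structure constants and no need to verify ambiguity resolution; what the paper's version aims to buy is an independent derivation of the normal form itself. One small point to tighten: the pair $\bigl(B^lC^k\bigr)\bigl(C^{k'}A^{l'}\bigr)$ is also not immediate --- it produces $B^lC^{k+k'}A^{l'}$, which is \emph{not} among \eqref{Heisenbasis} --- and it has no inner $A^iB^j$ block; it is handled by first commuting $C^{k+k'}$ past $A^{l'}$ via $C^kA^l=q^{-kl}A^lC^k$ and then applying your $B^lA^{l'}$ straightening, so the tools you list do cover all nine cases, but this case should be named explicitly rather than filed under the ``immediate'' ones.
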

Proposition~\ref{diamondProp} shall turn out to be crucial in one of our main results because of the finiteness condition it asserts about finite products in the $q$-deformed Heisenberg algebra.  The proof of Proposition~\ref{diamondProp} is based on an algebraic result called the \emph{Diamond Lemma for Ring Theory} \cite[Theorem 1.2]{Berg}, which is a standard tool for determining a Hamel basis of an algebra abstractly defined by generators and relations.

\section{A representation of a $q$-deformed Heisenberg algebra as an algebra of Hilbert space operators}\label{HilbRepSec}
In this section, we recall a representation of $\Heisen$ as an algebra of Hilbert space operators as discussed in \cite[Section 1.3]{Hor}, which turns out to be faithful using a result from \cite{Hel05}. By this representation, we are able to obtain our main results which imply that by simply knowing the spectrum of $B$ as an operator in the said representation, we are able to identify $\HeisenLie$ as precisely $\preLie$, and to identify the image of $\Heisen$ in the Calkin algebra over the relevant Hilbert space. This section culminates with our result about the image of $\Heisen$ in the Calkin algebra. The results about Lie polynomials are in the next section.

Throughout, we set $\F=\C$, the field of complex numbers. Let $\Hilb$ be an infinite-dimensional separable Hilbert space. i.e., The Hilbert space $\Hilb$ is linearly isometric to the sequence space $\HilbSeq$ of all square-summable sequences of scalars \cite[Theorem 1.38]{Fab}. Choose a complete orthonormal basis 
\begin{equation}\{\basvecHilb_n\  :\  n\in\N\}\label{nonketbasis}
\end{equation}
of $\Hilb$, and let $\HilbD\subset\Hilb$ denote the dense subspace spanned by \eqref{nonketbasis}. We can have the interpretation that $\lket n\rket :=\basvecHilb_n$ for any $n\in\N$. The choice of using the notation without the ket is purely for computational convenience. With reference to our notation about unital associative algebras, we denote also by the symbol $\algI$ the identity linear transformation $\HilbD\rightarrow\HilbD$. An operator $V$ on $\Hilb$ is called a \emph{diagonal operator} (with respect to our chosen basis) if there exists a sequence $\left(\alpha_n\right)_{n\in\N}$ of scalars such that $V\basvecHilb_n=\alpha_n\basvecHilb_n$ for any $n$, and in such a case, we call the sequence $\left(\alpha_n\right)_{n\in\N}$ the \emph{diagonal of} $V$. The operator $U$ on $\Hilb$ defined by $U\basvecHilb_n:=\basvecHilb_{n+1}$ is called the \emph{unilateral shift operator}. An operator $L$ on $\Hilb$ is called a \emph{(unilateral) weighted shift operator} if $L=UV$ for some diagonal operator $V$, in which case the sequence of  \emph{weights} of $L$ is defined as the diagonal of $V$. Given $n\in\N$, let $\{n\}_q:=1+q+q^2+\cdots+q^{n-1}$. Observe that if $q\neq 1$, then $\{n\}_q=\frac{1-q^n}{1-q}$.  We define $\newHB:\HilbD\rightarrow\HilbD$ as the linear transformation defined by $\newHB\basvecHilb_n := \sqrt{\{n+1\}_q}\basvecHilb_{n+1}$, and $\newHA:\HilbD\rightarrow\HilbD$ as the linear transformation defined by $\newHA\basvecHilb_n := \sqrt{\{n\}_q}\basvecHilb_{n-1}$ if $n\geq 1$, and $\newHA\basvecHilb_0=0$.

\begin{proposition}\label{routineProp} For any $n\in\N$, let $\commdiag:=\{n\}_q$. If the sequence $\commdiagS_{n\in\N}$  is bounded, then $\newHA$, $\newHB$ are operators  on the Hilbert space $\Hilb$. If this is the case, then we further have:
\begin{enumerate} 
\item\label{ABdiag1} The linear transformation $\newHB$ is a weighted shift operator.
\item\label{ABdiag2} For any $k\in\Z^+$, the operator $\lbrack\newHA ,\newHB\rbrack^k$ is a diagonal operator with diagonal $\left(q^{kn}\right)_{n\in\N}$ for any $k\in\Z^+$.
\end{enumerate}
\end{proposition}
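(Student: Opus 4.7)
The plan is to verify each claim by direct computation on the orthonormal basis $\{\basvecHilb_n\}_{n\in\N}$ of $\HilbD$ and then extend by boundedness to the whole of $\Hilb$. I do not expect any genuine obstacle here: each piece reduces to an evaluation on a single basis vector, and the only mildly subtle point is handling $\newHB\newHA\basvecHilb_0$ separately and noting that the convention $\{0\}_q=0$ makes the final formulas uniform in $n$.

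For the existence claim, fix $M>0$ with $\{n\}_q\leq M$ for every $n\in\N$ (the quantities $\{n\}_q$ are nonnegative since $q\in(0,1)$). For any finitely supported $v=\sum_n c_n\basvecHilb_n\in\HilbD$, orthogonality of the basis together with the definitions of $\newHA$ and $\newHB$ yields
\begin{equation*}
\norm{\newHB v}^2 = \sum_n |c_n|^2\{n+1\}_q \leq M\norm{v}^2, \qquad \norm{\newHA v}^2 = \sum_n |c_n|^2\{n\}_q \leq M\norm{v}^2,
\end{equation*}
so $\newHA$ and $\newHB$ extend uniquely to bounded operators on $\Hilb$. For part (i), let $V$ be the diagonal operator with diagonal $\left(\sqrt{\{n+1\}_q}\right)_{n\in\N}$, which is bounded by the same estimate. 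Then $\unilat V\basvecHilb_n = \sqrt{\{n+1\}_q}\,\basvecHilb_{n+1} = \newHB\basvecHilb_n$ for every $n$, so $\newHB=\unilat V$ is a unilateral weighted shift in the sense defined just above the proposition.

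For part (ii), I would first handle the case $k=1$ and then induct. From the definitions, $\newHA\newHB\basvecHilb_n=\{n+1\}_q\basvecHilb_n$ for every $n\in\N$, while $\newHB\newHA\basvecHilb_n=\{n\}_q\basvecHilb_n$ for $n\geq 1$ and $\newHB\newHA\basvecHilb_0=0$. The telescoping identity
\begin{equation*}
\{n+1\}_q-\{n\}_q = q^n \qquad (n\in\N),
\end{equation*}
with the convention $\{0\}_q=0$, is immediate from $\{n\}_q=1+q+\cdots+q^{n-1}$ and yields $\lbrack\newHA,\newHB\rbrack\basvecHilb_n=q^n\basvecHilb_n$ uniformly in $n$. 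For $k\geq 2$, an induction on $k$, in which powers of a diagonal operator act entrywise on the diagonal, produces $\lbrack\newHA,\newHB\rbrack^k\basvecHilb_n=q^{kn}\basvecHilb_n$, which establishes (ii).
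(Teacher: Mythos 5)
Your proof is correct and follows essentially the same route as the paper, which simply declares these verifications routine (citing Hora--Obata for the boundedness and the $k=1$ case of (ii)) and exhibits the same diagonal operator $V$ with diagonal $\left(\sqrt{\{n+1\}_q}\right)_{n\in\N}$ for part (i). You have merely written out the routine computations in full, including the telescoping identity $\{n+1\}_q-\{n\}_q=q^n$ and the induction on $k$, all of which check out.
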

\begin{proof} The proof of this proposition is routine. We only note here that the statement before \ref{ABdiag1} is taken from \cite[Exercise 1.5]{Hor}. The consequences \ref{ABdiag1} and \ref{ABdiag2} can be verified by routine computations. For \ref{ABdiag1}, the required $V$ is the diagonal operator with diagonal $\left(\alpha_n\right)_{n\in\N}$ defined by $\alpha_n:=\sqrt{\{n+1\}_q}$. For \ref{ABdiag2}, the case $k=1$ has been worked out in \cite[Proposition 1.28]{Hor}, and we simply proceed by induction on $k$.
\end{proof}

\begin{lemma}\label{BpmLem} The linear transformations $\newHA$ and $\newHB$ satisfy the relation
\begin{eqnarray}
\newHA\newHB - q \newHB\newHA & = & \algI.\label{qcommREL}
\end{eqnarray}
Furthermore, the operator $\lbrack\newHA,\newHB\rbrack ^k$ is not the zero operator for all $k\in\N$; i.e.,
\begin{eqnarray}
\lbrack\newHA,\newHB\rbrack^k & \neq & 0,\quad\quad(k\in\N).\label{faithfulREL}
\end{eqnarray}
\end{lemma}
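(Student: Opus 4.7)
The plan is to verify both assertions by direct computation on the orthonormal basis $\{\basvecHilb_n : n\in\N\}$, leveraging the telescoping identity for the $q$-numbers $\{n\}_q$ and then invoking Proposition~\ref{routineProp}\ref{ABdiag2} for the non-vanishing statement.

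First I would prove \eqref{qcommREL} by computing how $\newHA\newHB$ and $\newHB\newHA$ act on $\basvecHilb_n$. Using the defining formulas $\newHB\basvecHilb_n=\sqrt{\{n+1\}_q}\basvecHilb_{n+1}$ and $\newHA\basvecHilb_n=\sqrt{\{n\}_q}\basvecHilb_{n-1}$ (with $\newHA\basvecHilb_0=0$), one finds
\[
\newHA\newHB\basvecHilb_n=\{n+1\}_q\basvecHilb_n,\qquad \newHB\newHA\basvecHilb_n=\{n\}_q\basvecHilb_n,
\]
where the second identity is understood to give $0$ when $n=0$ (consistent with $\{0\}_q=0$). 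The key arithmetic observation is then the telescoping identity
\[
\{n+1\}_q-q\{n\}_q=(1+q+\cdots+q^n)-(q+q^2+\cdots+q^n)=1,
\]
which holds for every $n\in\N$ (including $n=0$, as $\{1\}_q=1$). Applied to each basis vector, this gives $(\newHA\newHB-q\newHB\newHA)\basvecHilb_n=\basvecHilb_n$, and then the relation $\newHA\newHB-q\newHB\newHA=\algI$ on $\HilbD$ extends by continuity to all of $\Hilb$ (which is permissible since $\newHA,\newHB$ are operators by Proposition~\ref{routineProp}).

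For \eqref{faithfulREL}, I would simply appeal to Proposition~\ref{routineProp}\ref{ABdiag2}, which states that $\lbrack\newHA,\newHB\rbrack^k$ is diagonal with diagonal $(q^{kn})_{n\in\N}$. Since $q\in\,]0,1[$ is strictly positive, every entry $q^{kn}$ is nonzero; in particular $\lbrack\newHA,\newHB\rbrack^k\basvecHilb_0=\basvecHilb_0\neq 0$, so the operator itself is nonzero.

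There is no real obstacle here: both parts reduce to one-line computations once the telescoping identity for $\{n\}_q$ is noted and Proposition~\ref{routineProp} is in hand. The only subtlety worth flagging is the boundary case $n=0$ in the derivation of \eqref{qcommREL}, where $\newHB\newHA\basvecHilb_0=0$ must be treated separately to confirm that the relation still produces $\basvecHilb_0$ on the right-hand side.
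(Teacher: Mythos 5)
Your proposal is correct and follows essentially the same route as the paper: the paper simply cites \cite[Remark 1.34]{Hor} for \eqref{qcommREL} (the computation you carry out explicitly, via $\{n+1\}_q-q\{n\}_q=1$, is exactly what that reference verifies), and for \eqref{faithfulREL} it uses the identical observation that $\lbrack\newHA,\newHB\rbrack^k\basvecHilb_0=\basvecHilb_0\neq 0$ by Proposition~\ref{routineProp}\ref{ABdiag2}. Your explicit handling of the $n=0$ boundary case is a sensible addition but does not change the argument.
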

\begin{proof} We refer the reader to the facts in \cite[Section 1.3]{Hor}, particularly \cite[Remark 1.34]{Hor}, for \eqref{qcommREL}. To prove \eqref{faithfulREL}, by Proposition~\ref{routineProp}\ref{ABdiag2}, for any $k\in\Z^+$ we have $\lbrack\newHA ,\newHB\rbrack^k\basvecHilb_0=\basvecHilb_0\neq 0$ and so $\lbrack\newHA,\newHB\rbrack ^k$ is not the zero operator.
\end{proof}

Denote by $\OpAlg$ the algebra of all operators on $\Hilb$. By the \emph{Calkin algebra on} $\Hilb$, we mean the algebra $\OpAlg$ modulo the ideal of all compact operators. Suppose that the hypothesis of Proposition~\ref{routineProp} holds, and so $\newHA,\newHB\in\OpAlg$. In view of \eqref{qcommREL}, the multiplicative identity in $\OpAlg$ is expressible in terms of $\newHA$, $\newHB$, and so the subalgebra $\preHeisen$ of $\OpAlg$ generated by $\newHA$, $\newHB$ is unital. The property \eqref{qcommREL} suggests that $\preHeisen$ is a homomorphic image of $\Heisen$, or equivalently, there exists an algebra homomorphism $\Heisen\rightarrow\preHeisen$. Following standard terminology, any such homomorphism of $\Heisen$ onto an algebra of linear transformations of some vector space into itself is called a \emph{representation} of $\Heisen$. As to whether the said representation $\Heisen\rightarrow\preHeisen$ is \emph{faithful} (i.e., injective) or not, the following will be useful.

\begin{proposition}[{\cite[Theorem 6.7]{Hel05}}]\label{faithProp} If $q$ is nonzero and is not a root of unity, then a representation $\varphi$ of $\Heisen$ is faithful if and only if $\lbrack\varphi(A),\varphi(B)\rbrack^k\neq 0$ for all $k\in\N$.
\end{proposition}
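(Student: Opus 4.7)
The statement is cited verbatim from \cite[Theorem 6.7]{Hel05}, so the paper itself simply quotes it; nevertheless, here is how I would approach the proof from first principles.

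The forward direction is immediate: by Proposition~\ref{BasisProp}, each $\LieAB^k$ with $k\in\Z^+$ is a basis vector of $\Heisen$, and $\LieAB^0=\algI$ is nonzero as well, so $\LieAB^k\neq 0$ for every $k\in\N$. If $\varphi$ is injective then $\varphi(\LieAB^k)=\lbrack\varphi(A),\varphi(B)\rbrack^k$ remains nonzero for every $k$.

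For the converse, the substantive input I would use is a classification of the two-sided ideals of $\Heisen$ under the stated hypothesis on $q$: namely, that every nonzero two-sided ideal must contain some power $\LieAB^k$. Granting this, the contrapositive is clean — a non-faithful $\varphi$ has a nonzero kernel, that kernel must contain $\LieAB^k$ for some $k$, and therefore $\lbrack\varphi(A),\varphi(B)\rbrack^k=0$, contradicting the hypothesis.

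To prove the ideal classification I would first derive the $q$-normality identities $A\LieAB=q\LieAB A$ and $\LieAB B=qB\LieAB$ directly from $AB-qBA=\algI$; these let one sweep powers of $\LieAB$ past monomials in $A$ and $B$ at the cost of only nonzero $q$-scalar factors. Given a nonzero element $U$ of a two-sided ideal $J$, expanded in the basis \eqref{Heisenbasis}, I would then repeatedly apply $\ad A$ and $\ad B$: on a summand $B^l\LieAB^k$ the map $\ad A$ shortens the $B$-tail and multiplies by a factor of the form $1-q^N$ for some positive integer $N$, and $\ad B$ does the same symmetrically to summands of the form $\LieAB^k A^l$. Since $q$ is not a root of unity, none of these factors vanish, so iterating eliminates the tails and isolates a nonzero scalar multiple of some $\LieAB^k$ inside $J$. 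The main obstacle throughout is the bookkeeping — organizing $U$ so that one can track a well-defined leading basis element that survives each commutator operation — and Proposition~\ref{diamondProp} is implicitly important here, since it guarantees these commutator manipulations stay within finite linear combinations of \eqref{Heisenbasis}.
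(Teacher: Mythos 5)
The paper offers no proof of this proposition: it is quoted verbatim from \cite[Theorem 6.7]{Hel05}, so there is no in-paper argument to measure your proposal against. That said, your outline follows the route of the cited source itself: the forward direction from injectivity together with Proposition~\ref{BasisProp}, and the converse by observing that $\ker\varphi$ is a two-sided ideal and invoking the classification result that, for $q$ nonzero and not a root of unity, every nonzero two-sided ideal of $\Heisen$ contains some power of $\LieAB$ --- which is the central theorem of \cite{Hel05}. The one caution is that this ideal-classification lemma carries the entire weight of the statement, and your sketch of it is the thinnest part of the proposal: a single application of $\ad A$ to a summand $B^l\LieAB^k$ yields, after reduction to the basis \eqref{Heisenbasis}, a combination of $B^{l-1}\LieAB^{k}$ and $B^{l-1}\LieAB^{k+1}$ (with coefficients nonzero precisely because $q$ is not a root of unity), so iteration does shorten tails as you claim; but for a general element $U$ of the ideal one must also rule out cancellation among the images of its distinct summands, and the standard device for that is the $\Z$-grading of $\Heisen$ in which $A$, $B$, $\LieAB$ have degrees $-1$, $+1$, $0$, allowing one to isolate a single graded component before stripping. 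With that bookkeeping made precise, the strategy is sound and is essentially the one used in the reference.
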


\begin{lemma}\label{isoLem} If $\newHA,\newHB\in\OpAlg$, and if $q$ is nonzero and is not a root of unity, then the associative algebra homomorphism $\Heisen\rightarrow\preHeisen$ given by 
\begin{eqnarray}
A\mapsto\newHA,\quad\quad B\mapsto\newHB\label{isoH}
\end{eqnarray}
is an isomorphism.
\end{lemma}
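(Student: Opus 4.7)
The plan is to assemble the proof as a direct application of Proposition~\ref{faithProp}, using the work already done in Lemma~\ref{BpmLem}. Since $\Heisen$ is the unital associative algebra presented by generators $A,B$ subject to the single relation $AB-qBA=\algI$, the universal property of such a presentation guarantees that to specify a (well-defined) algebra homomorphism out of $\Heisen$ it suffices to send $A,B$ to any pair of elements in the target that satisfy the same relation. The relation \eqref{qcommREL} in Lemma~\ref{BpmLem} asserts exactly that $\newHA$ and $\newHB$ are such a pair in $\OpAlg$, so the assignment \eqref{isoH} extends uniquely to an associative algebra homomorphism $\varphi:\Heisen\rightarrow\OpAlg$, whose image is the subalgebra generated by $\newHA,\newHB$.

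Next I would observe that $\varphi$ is surjective onto $\preHeisen$ essentially by definition, since $\preHeisen$ was introduced as the subalgebra of $\OpAlg$ generated by $\newHA,\newHB$, and the image of $\varphi$ is the smallest such subalgebra.

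The only remaining task is injectivity. This is where Proposition~\ref{faithProp} does all the heavy lifting: under the standing hypothesis that $q$ is nonzero and not a root of unity, a representation of $\Heisen$ is faithful if and only if $\lbrack\varphi(A),\varphi(B)\rbrack^k\neq 0$ for every $k\in\N$. Since $\varphi(A)=\newHA$ and $\varphi(B)=\newHB$, the required condition is precisely $\lbrack\newHA,\newHB\rbrack^k\neq 0$ for all $k\in\N$, which is the content of \eqref{faithfulREL} in Lemma~\ref{BpmLem} (established via Proposition~\ref{routineProp}\ref{ABdiag2} by noting that $\lbrack\newHA,\newHB\rbrack^k\basvecHilb_0=\basvecHilb_0$).

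There is no real obstacle here; the proof is a bookkeeping exercise that composes existing results. If anything, the only subtlety to be careful about is to cite the universal property of the presentation of $\Heisen$ so that $\varphi$ is unambiguously defined before invoking Proposition~\ref{faithProp}, and to note that the hypothesis $\newHA,\newHB\in\OpAlg$ is what allows \eqref{qcommREL} and \eqref{faithfulREL} to be used in the operator-algebraic setting rather than just formally.
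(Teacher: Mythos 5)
Your proposal is correct and follows essentially the same route as the paper: the relation \eqref{qcommREL} shows $\newHA,\newHB$ satisfy the defining relation of $\Heisen$, giving a surjective homomorphism onto $\preHeisen$, and injectivity follows from \eqref{faithfulREL} together with Proposition~\ref{faithProp}. Your extra remarks on the universal property of the presentation only make explicit what the paper leaves implicit.
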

\begin{proof} By \eqref{qcommREL}, the generators $\newHA$, $\newHB$ satisfy the defining relation of $\Heisen$, and so $\preHeisen$ is a homomorphic image of $\Heisen$. By \eqref{faithfulREL} and Proposition~\ref{faithProp}, the representation of $\Heisen$ given by \eqref{isoH} is faithful.
\end{proof}

\begin{remark}\label{isorem}\emph{Throughout, we assume that $q\in\left]0,1\right[$. Then the hypotheses of Proposition~\ref{routineProp} and Lemma~\ref{isoLem} are all satisfied, and so we now identify $A$, $B$ as the operators $\newHA$, $\newHB$, respectively, and $\Heisen$ as the subalgebra of $\OpAlg$ generated by $A,B$. A norm is hence induced in $\Heisen$ which is simply the norm $\norm{\cdot}$ in $\OpAlg$.
}\end{remark}

\begin{proposition}\begin{enumerate}\item\label{pownorm1} The operators $A,B$ are mutually adjoint, and for any $l\in\Z^+$, we have 
\begin{eqnarray} \norm{B^l}=(1-q)^{-\frac{l}{2}}=\norm{A^l}.
\end{eqnarray}
\item\label{pownorm2} For any $k\in\Z^+$, the operator $\LieAB^k$ is Hermitian with norm $1$.
\end{enumerate}
\end{proposition}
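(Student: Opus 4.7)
The plan is to use the explicit action of $A,B$ on the chosen orthonormal basis $\{\basvecHilb_n\}_{n\in\N}$ to verify both parts, exploiting the fact that $B$ is a weighted shift while $\LieAB^k$ is already known to be diagonal by Proposition~\ref{routineProp}\ref{ABdiag2}.

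For \ref{pownorm1}, I would first check mutual adjointness by direct computation on basis vectors: for all $m,n\in\N$ the inner products $\LHilb B\basvecHilb_n,\basvecHilb_m\RHilb$ and $\LHilb \basvecHilb_n,A\basvecHilb_m\RHilb$ both vanish unless $m=n+1$, in which case they both equal $\sqrt{\{n+1\}_q}$. Since $\{\basvecHilb_n\}$ spans the dense subspace $\HilbD$ and both $A,B$ are bounded, this extends uniquely to $A=B^*$. Next, iterating the defining action yields
\[
B^l\basvecHilb_n \;=\; \sqrt{\{n+1\}_q\{n+2\}_q\cdots\{n+l\}_q}\,\basvecHilb_{n+l},
\]
so $B^l$ is again a weighted shift and sends distinct basis vectors to pairwise orthogonal vectors. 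Consequently
\[
\norm{B^l} \;=\; \sup_{n\in\N}\sqrt{\prod_{j=1}^{l} \{n+j\}_q}.
\]
Because $0<q<1$ and $\{m\}_q = (1-q^m)/(1-q)$, the sequence $(\{m\}_q)_{m\in\Z^+}$ is strictly increasing to $(1-q)^{-1}$, so the supremum above equals $(1-q)^{-l/2}$. The identity $\norm{A^l}=\norm{B^l}$ then follows immediately from $A^l=(B^l)^*$ and the fact that taking the adjoint preserves operator norm.

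For \ref{pownorm2}, Proposition~\ref{routineProp}\ref{ABdiag2} already states that $\LieAB^k$ is diagonal with real diagonal $(q^{kn})_{n\in\N}$; such an operator coincides with its adjoint on each basis vector and hence, by density of $\HilbD$ and boundedness, is Hermitian. Its norm equals $\sup_{n\in\N}|q^{kn}|$, which is $1$, attained at $n=0$ because $0<q<1$.

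The main obstacle is minor and bookkeeping in nature: one must justify that the supremum of $\norm{B^l\basvecHilb_n}$ over $n\in\N$ equals the true operator norm on all of $\Hilb$. This is standard for weighted shifts and follows from the orthogonality of the images $B^l\basvecHilb_n$ together with a density argument on $\HilbD$, combined with the monotone convergence $\{m\}_q\to(1-q)^{-1}$ that guarantees the supremum is actually achieved in the limit.
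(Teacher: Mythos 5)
Your proposal is correct and follows essentially the same route as the paper: mutual adjointness checked on the basis, $\norm{A^l}=\norm{B^l}$ via $A^l=(B^l)^*$, the weighted-shift norm formula $\norm{B^l}=\sup_n\left|\prod_{i=0}^{l-1}\alpha_{n+i}\right|$, and the diagonal-operator facts for $\LieAB^k$. The only difference is that you derive the shift-norm and diagonal-norm formulas directly from orthogonality of the images $B^l\basvecHilb_n$, whereas the paper simply cites Halmos (Solutions 91 and p.~34) for them.
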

\begin{proof}\ref{pownorm1} The proof for the mutual adjointness of $A,B$ is routine. See for instance, \cite[Lemma 1.25]{Hor}. By induction on $l$, we find that the adjoint of $B^l$ is $A^l$, and so $\norm{B^l}=\norm{A^l}$. To compute for $\norm{B^l}$, we recall a formula from \cite[Solution 91]{Halm82} that for the weighted shift $B$ with weights $\alpha_n:=\sqrt{\frac{1-q^{n+1}}{1-q}}$, we have $\norm{B^l}=\sup_n\left|\prod_{i=0}^{l-1}\alpha_{n+i}\right|=(1-q)^{-\frac{l}{2}}$.

\noindent\ref{pownorm2} That $\LieAB=AB-BA$ is Hermitian follows immediately from the fact that $A,B$ are mutually adjoint. By induction, the operator $\LieAB^k$ is Hermitian for any $k\in\Z^+$. Also, since $\LieAB^k$ is a diagonal operator with diagonal $\left(q^{kn}\right)_{n\in\N}$, we recall from \cite[p. 34]{Halm82} that $\norm{\LieAB^k}=\sup_n\left|q^{kn}\right|=1$.
\end{proof}

The actual formula for the action of $\newHA$ and $\newHB$ on the chosen basis elements of $\HilbSeq$ can be found in, among many other sources, the work \cite[Equation (9)]{Chu96}. The work \cite{Chu96} also cites \cite{Sch94} as a source for the representation of $\Heisen$ using mutually adjoint operators. It is just that we use the facts from \cite[Section 1.3]{Hor} because in it, these basic properties of $\newHA$ and $\newHB$ that we need for our goals in this work are nicely presented.

\subsection{Spectral properties of the generators of $\Heisen$}

Our next goal is to compute the spectra of operators in $\Heisen$ that are of significance in the main results of this work. We start with the operator $\LieAB^k$ ($k\in\Z^+$) which has a rather simple spectrum.

\begin{lemma}\label{LieABlem}\begin{enumerate}\item\label{Herm1} For any $k\in\Z^+$, the spectrum of the Hermitian operator $\LieAB^k$ is $\{0\}\cup\{q^{kn}\  :\  n\in\N\}$, where the nonzero elements are precisely all the eigenvalues. The eigenspace for the eigenvalue $q^{kn}$ is the span of $\basvecHilb_n$.
\item\label{Calkin1} If we decompose $\preLie$ into the direct sum $\preLie=\Span\{A,B\}+\derpreLie$ of vector spaces such that a basis for $\derpreLie$ consists of \eqref{preLiebasis} except $A,B$, then each element of $\derpreLie$ is a compact operator.
\end{enumerate}
\end{lemma}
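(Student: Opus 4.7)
\begin{proofof}{Lemma~\ref{LieABlem} (proposal)}
For part \ref{Herm1}, the plan is to leverage Proposition~\ref{routineProp}\ref{ABdiag2}, which already identifies $\LieAB^k$ as the diagonal operator with diagonal $(q^{kn})_{n\in\N}$. From the defining action $\LieAB^k\basvecHilb_n=q^{kn}\basvecHilb_n$, each $q^{kn}$ is immediately an eigenvalue with $\basvecHilb_n$ a corresponding eigenvector. Because $q\in\,]0,1[$ and $k\geq 1$, the scalars $q^{kn}$ are pairwise distinct, so the Hermitian operator $\LieAB^k$ has simple eigenvalues, and the eigenspace for $q^{kn}$ is exactly $\Span\{\basvecHilb_n\}$; no nonzero vector $v=\sum c_n\basvecHilb_n$ is sent to $0$ because $c_n q^{kn}=0$ forces every $c_n$ to vanish, so $0$ is not an eigenvalue. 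To finish I would invoke the standard fact that the spectrum of a diagonal operator coincides with the closure of its diagonal sequence; since $q^{kn}\to 0$, this closure is $\{0\}\cup\{q^{kn}\  :\  n\in\N\}$, and $0$ lies in the spectrum as a limit point (equivalently, $\LieAB^k$ is not bounded below, so $0$ is an approximate eigenvalue).

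For part \ref{Calkin1}, my plan is to reduce the claim to the single assertion that $\LieAB^k$ is compact for every $k\in\Z^+$, after which everything follows from the standard fact that the set of compact operators is a two-sided ideal in $\OpAlg$ closed under linear combinations. Indeed, a basis for $\derpreLie$ is
\begin{equation*}
\LieAB^k,\quad \LieAB^k A^l,\quad B^l\LieAB^k\quad (k,l\in\Z^+),
\end{equation*}
and the last two types are each a product of $\LieAB^k$ with the bounded operator $A^l$ or $B^l$, using part \ref{pownorm1} of the norm proposition to see $A^l,B^l\in\OpAlg$. So compactness of $\LieAB^k$ plus the ideal property instantly give compactness of every basis element, whence of every element of $\derpreLie$ by linearity.

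To prove $\LieAB^k$ is compact, I would use that $\LieAB^k$ is diagonal with diagonal $(q^{kn})_{n\in\N}$ converging to $0$. Writing $P_N$ for the orthogonal projection onto $\Span\{\basvecHilb_0,\ldots,\basvecHilb_{N}\}$, the truncations $\LieAB^k P_N$ are finite-rank, and
\begin{equation*}
\norm{\LieAB^k-\LieAB^k P_N}=\sup_{n>N}|q^{kn}|=q^{k(N+1)}\xrightarrow[N\to\infty]{}0,
\end{equation*}
so $\LieAB^k$ is a norm limit of finite-rank operators, hence compact.

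The only potentially subtle step is the spectrum computation in \ref{Herm1} --- specifically verifying that $0$ belongs to the spectrum and that no further points appear. Both follow from the general fact about diagonal operators already alluded to, but I would spell out the approximate-eigenvalue argument $\norm{\LieAB^k\basvecHilb_n}=q^{kn}\to 0$ to make the inclusion $0\in\sigma(\LieAB^k)$ transparent, and the direct solvability of $(\LieAB^k-\mu)v=f$ coordinatewise for $\mu\notin\{0\}\cup\{q^{kn}:n\in\N\}$ (together with the uniform bound $\sup_n|q^{kn}-\mu|^{-1}<\infty$ away from this closed set) to exclude other points of the spectrum.
\end{proofof}
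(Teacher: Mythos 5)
Your proposal is correct and follows essentially the same route as the paper: part \ref{Herm1} via the standard description of the spectrum and eigenvalues of a diagonal operator (which the paper simply cites from Halmos, while you sketch the approximate-eigenvalue and resolvent arguments directly), and part \ref{Calkin1} by reducing to compactness of $\LieAB^k$ together with the ideal property of compact operators, the compactness itself coming from the diagonal $(q^{kn})_{n\in\N}$ tending to $0$ (your finite-rank truncation argument is exactly the proof of the fact the paper cites). The only cosmetic difference is that the paper reduces further to compactness of the single factor $\LieAB$ rather than of each power $\LieAB^k$.
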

\begin{proof}\ref{Herm1} Since $\LieAB^k$ is a diagonal operator, we recall from \cite[Solution 80]{Halm82} that the scalars in the diagonal, which is precisely the set $\{q^{kn}\  :\  n\in\N\}$, is equal to the set of all eigenvalues of $\LieAB^k$. Also from \cite[Solution 80]{Halm82}, the spectrum of $\LieAB^k$ equals the approximate point spectrum which equals the closure of the set $\{q^{kn}\  :\  n\in\N\}$. It is routine to show that this closure is equal to $\{0\}\cup\{q^{kn}\  :\  n\in\N\}$, which means that the only approximate proper value that is not an eigenvalue is $0$. Consider one of the eigenvalues $q^{kn}$. If $\sum_{m=0}^\infty c_m\basvecHilb_m$ is a nonzero eigenvector for $q^{kn}$, then it is routine to show that $c_m=0$ if $m\neq n$ or nonzero otherwise. Therefore, the eigenspace for $q^{kn}$ is the one-dimensional span of $\basvecHilb_n$.

\noindent\ref{Calkin1} Proving that every basis element of $\derpreLie$ is compact would suffice, but since $\LieAB$ is a multiplicative factor of each such basis element, our task boils down to proving that $\LieAB$ is compact. But by \cite[p. 91]{Halm82}, we simply need to show that the diagonal of $\LieAB$, as a sequence, has limit $0$. By Proposition~\ref{routineProp}\ref{ABdiag2}, this diagonal is $\left(q^{kn}\right)_{n\in\N}$ with $\lim_{n\rightarrow\infty}q^{kn}=0$. Therefore, $\LieAB$ is compact.
\end{proof}

Lemma~\ref{LieABlem}\ref{Calkin1} says, in particular, that $\LieAB^k$ is compact for any $k\in\Z^+$. This is not the case for the generators $A,B$ as we have in the following.

\begin{proposition}\label{FredProp} \begin{enumerate} \item\label{Calkin2} The operator $B$ is not compact but is Fredholm, and the same properties hold for its adjoint $A$.
\item\label{Calkin3} In the Calkin algebra on $\Hilb$, the inverse of $B$ is a scalar multiple of the image of $A$.
\end{enumerate}
\end{proposition}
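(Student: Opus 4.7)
The plan is to prove part (ii) first, because the Calkin-algebra invertibility of $\overline{B}$ immediately implies the Fredholm property in part (i) via Atkinson's theorem, while non-compactness drops out as well. The key observation is that the defining relation $AB - qBA = \algI$, combined with the compactness of $\LieAB$ already established in Lemma \ref{LieABlem}\ref{Calkin1}, forces both $AB$ and $BA$ to be congruent to scalar multiples of the identity modulo compacts.

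Concretely, to prove (ii), I would subtract $AB - qBA = \algI$ (from Lemma \ref{BpmLem}) from the trivial identity $AB - BA = \LieAB$ to obtain $(1-q)BA = \algI - \LieAB$, hence
\[
BA = \frac{1}{1-q}\bigl(\algI - \LieAB\bigr), \qquad AB = \algI + qBA = \frac{1}{1-q}\bigl(\algI - q\LieAB\bigr).
\]
Since $\LieAB$ is compact, passing to the Calkin algebra yields $\overline{AB} = \overline{BA} = (1-q)^{-1}\overline{\algI}$, which precisely says that $(1-q)\overline{A}$ is a two-sided inverse of $\overline{B}$ in the Calkin algebra, establishing (ii).

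For (i), I would invoke Atkinson's theorem: invertibility of $\overline{B}$ in the Calkin algebra is equivalent to $B$ being Fredholm, and the same applies to $\overline{A}$. Non-compactness of $B$ is then automatic, since a compact operator has zero image in the Calkin algebra whereas $\overline{B}$ is invertible, hence nonzero; if preferred one could instead argue directly by noting that $B\basvecHilb_n = \sqrt{\{n+1\}_q}\basvecHilb_{n+1}$ has norm converging to $(1-q)^{-1/2}\neq 0$ while $\basvecHilb_n\to 0$ weakly. The corresponding statements for $A$ follow from $A = B^*$ together with the standard facts that adjoints preserve both the Fredholm property and non-compactness.

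There is no serious obstacle here; the only mildly delicate point is noticing that the $q$-commutation relation together with the compactness of $\LieAB$ is already enough to pin down $\overline{AB}$ and $\overline{BA}$ as scalar operators in the Calkin algebra, after which everything else is a direct application of well-known operator-theoretic machinery.
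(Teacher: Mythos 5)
Your proposal is correct and follows essentially the same route as the paper: both rest on the identities $(1-q)BA=\algI-\LieAB$ and $(1-q)AB=\algI-q\LieAB$, the compactness of $\LieAB$, and invertibility modulo compacts (Atkinson), which the paper uses implicitly. The only difference is cosmetic ordering --- you derive the Calkin-algebra inverse first and read off Fredholmness, while the paper checks non-compactness directly from the weights and states the two relations before passing to the quotient.
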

\begin{proof}\ref{Calkin2} By definition, the weights of $B$ are in the sequence $\left(\alpha_n\right)_{n\in\N}$ with $\alpha_n:=\sqrt{\{n+1\}_q}$ which does not have a limit of $0$. By \cite[p. 91]{Halm82}, this implies that $B$ is not compact. To show that $B$ is Fredholm, we use the relations $\LieAB=AB-BA$ and $AB-qBA=\algI$ to obtain
\begin{eqnarray}
B((1-q)A) & = & \algI + (-1)\LieAB,\label{Fred1}\\
((1-q)A)B & = & \algI + (-q)\LieAB,\label{Fred2}
\end{eqnarray}
where $(1-q)A$ is bounded and the operators $(-1)\LieAB$ and $(-q)\LieAB$ are compact. Then the relations \eqref{Fred1}, \eqref{Fred2} imply that $B$ is Fredholm. By rewriting the left-hand sides of \eqref{Fred1}, \eqref{Fred2} into $((1-q)B)A$ , $A((1-q)B)$, respectively, we find that $A$ is also Fredholm.

\noindent\ref{Calkin3} Let $\pi$ be the canonical map from $\OpAlg$ onto the Calkin algebra on $\Hilb$. The compact operators $-\LieAB$ and $-q\LieAB$ have zero image under $\pi$, and so by \eqref{Fred1}, \eqref{Fred2}, the inverse of $\pi(B)$ is $\pi((1-q)A)=(1-q)\pi(A)$.
\end{proof}

We now compute the spectrum of $B$. Since we have begun to mention the Calkin algebra, it is natural to investigate at least the algebraic structure of the image of $\Heisen$ under the canonical map of $\OpAlg$ onto the Calkin algebra on $\Hilb$, and we show that a very important property of the said algebraic structure can be known simply by knowing the spectrum of $B$. We recall some important facts about the spectra of weighted shifts. If $R$ is a weighted shift with weights in $\left(\alpha_n\right)_{n\in\N}$, we recall the quantities
\begin{eqnarray}
r(R) & = & \lim_{k\in\Z^+}\sup_{n\in\N}\left|\prod_{i=0}^{k-1}\alpha_{n+i}\right|^{\frac{1}{k}},\label{specrad}\\
i(R) & = & \lim_{k\in\Z^+}\inf_{n\in\N}\left|\prod_{i=0}^{k-1}\alpha_{n+i}\right|^{\frac{1}{k}},\label{speci}
\end{eqnarray}
where $r(R)$ is called the \emph{spectral radius} of $R$. The spectral radius of an operator is a rudiment of Operator Theory found in standard texts \cite{Halm82,Halm57}, and is the supremum among  the moduli of all elements of the spectrum of the operator. The expression $i(R)$ is taken from \cite[p. 349]{Ridg} and is useful in the computation of the approximate point spectrum of the weighted shift $R$. We only note here that in \cite{Ridg}, the fixed orthonormal basis of $\Hilb$ is indexed by $\Z^+$.  In our work, we index the orthonormal basis with $\N$ instead of $\Z^+$ as consistent with \cite{Halm82, Hor}, and thus, the formula \eqref{speci} in comparison with that in \cite[p. 349]{Ridg} has been adjusted for the change in indexing set.

\begin{proposition}\label{riProp} $r(B)=(1-q)^{-\frac{1}{2}}=i(B)$.
\end{proposition}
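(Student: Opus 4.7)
The plan is to use the explicit form of the weights of $B$, which by Proposition~\ref{routineProp} are $\alpha_n=\sqrt{\{n+1\}_q}=\sqrt{(1-q^{n+1})/(1-q)}$. Since $0<q<1$, the sequence $(\alpha_n)_{n\in\N}$ is strictly increasing and converges to $(1-q)^{-1/2}$ from below. This monotonicity lets me identify the suprema and infima in \eqref{specrad} and \eqref{speci} without any optimization argument.

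First I would handle $r(B)$. Because each factor in $\prod_{i=0}^{k-1}\alpha_{n+i}$ is bounded above by $(1-q)^{-1/2}$, the whole product is bounded above by $(1-q)^{-k/2}$. On the other hand, letting $n\to\infty$ makes every factor $\alpha_{n+i}$ approach $(1-q)^{-1/2}$, and therefore $\sup_{n\in\N}\prod_{i=0}^{k-1}\alpha_{n+i}=(1-q)^{-k/2}$. Raising to the $1/k$-th power and taking the limit in $k$ immediately gives $r(B)=(1-q)^{-1/2}$.

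For $i(B)$, monotonicity of $(\alpha_n)$ implies that the infimum over $n$ is attained at $n=0$, so
\begin{equation*}
\inf_{n\in\N}\left|\prod_{i=0}^{k-1}\alpha_{n+i}\right|^{1/k}=(1-q)^{-1/2}\left(\prod_{i=1}^{k}(1-q^{i})\right)^{1/(2k)}.
\end{equation*}
The task then reduces to showing that $P_k^{1/k}\to 1$, where $P_k:=\prod_{i=1}^{k}(1-q^i)$. I expect this to be the main (and only real) obstacle. The standard way around it is to observe that, since $\sum_{i=1}^\infty q^i=q/(1-q)<\infty$, the infinite $q$-Pochhammer product $P_\infty:=\prod_{i=1}^\infty(1-q^i)$ converges to a strictly positive real number (this is classical; one checks $\sum|\log(1-q^i)|<\infty$). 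Consequently $P_k\to P_\infty\in(0,1)$, and so $P_k^{1/k}\to 1$. Plugging this back yields $i(B)=(1-q)^{-1/2}$, matching $r(B)$, which completes the proof.
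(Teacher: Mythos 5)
Your proof is correct and follows the same overall outline as the paper's: both arguments start from the explicit weights $\alpha_n=\sqrt{\{n+1\}_q}$, use monotonicity of $(\alpha_n)$ to identify $\sup_{n}\prod_{i=0}^{k-1}\alpha_{n+i}=(1-q)^{-k/2}$ as the limiting value and to see that the infimum is attained at $n=0$, and then reduce everything to the single limit $\lim_{k}\bigl(\prod_{i=1}^{k}(1-q^{i})\bigr)^{1/(2k)}=1$. Where you genuinely diverge is in how that last limit is justified. The paper observes that $c_k:=\prod_{i=0}^{k-1}(1-q^{i+1})^{\frac{1}{2k}}$ is monotonically increasing, positive, and bounded above by $1$, and concludes that the limit is $1$; as literally stated that inference is incomplete, since such a sequence could converge to any value in $\left]0,1\right]$. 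Your route --- noting that $\sum_{i\geq 1}q^{i}<\infty$ forces the infinite $q$-Pochhammer product $\prod_{i=1}^{\infty}(1-q^{i})$ to converge to a strictly positive number, so that $P_k$ is bounded away from $0$ and $P_k^{1/(2k)}\to 1$ --- is the standard argument and closes the step completely; it is, if anything, tighter than the published one. One cosmetic point: you state the target as $P_k^{1/k}\to 1$ while the exponent occurring in $i(B)$ is $\frac{1}{2k}$, but since $P_k^{1/(2k)}$ is the square root of $P_k^{1/k}$ this changes nothing.
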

\begin{proof} For each $k\in\Z^+$ and each $n\in\N$, define $$b_n:=b_n(k)=\prod_{i=0}^{k-1}(1-q^{n+i+1})^{\frac{1}{2k}}.$$ We claim that 
\begin{eqnarray}
\sup_{n\in\N}b_n & = & 1,\label{supbn}\\
\inf_{n\in\N}b_n & = & \prod_{i=0}^{k-1} (1-q^{i+1})^{\frac{1}{2k}}.\label{infbn}
\end{eqnarray}
To prove \eqref{supbn}, we note that since $0<q<1$, it is routine to show that $0<b_n< 1$ for any $n$ and so $0<\sup_{n\in\N}b_n\leq 1$. If this inequality is strict, then there exists a positive integer
\begin{eqnarray}
N>-1+\frac{\log(1-(\sup_{n\in\N} b_n)^2)}{\log q},\nonumber
\end{eqnarray}
from which, by routine calculations, we obtain the contradiction $\sup_{n\in\N}b_n<b_N$. Therefore $\sup_{n\in\N}b_n=1$. It is routine to show that the condition $0<q<1$ implies that for any $n\in\N$, we have $b_n<b_{n+1}$, and so $\inf_{n\in\N}b_n=b_0$, where $b_0$ is the right-hand side of \eqref{infbn}. This proves the claim. Recall from the proof of Proposition~\ref{FredProp}\ref{Calkin2} that for $B$ we have the $n$th weight as the positive number $\alpha_n=\sqrt{\frac{1-q^{n+1}}{1-q}}=\left|\alpha_n\right|$, and so by \eqref{specrad} to \eqref{infbn}, we have
\begin{eqnarray}
r(B)  & = & (1-q)^{-\frac{1}{2}}\lim_{k\in\Z^+}\sup_{n\in\N}b_n=(1-q)^{-\frac{1}{2}}\lim_{k\in\Z^+}1=(1-q)^{-\frac{1}{2}},\nonumber\\
i(B)  & = & (1-q)^{-\frac{1}{2}}\lim_{k\in\Z^+}\inf_{n\in\N}b_n,\nonumber\\
& = & (1-q)^{-\frac{1}{2}}\lim_{k\in\Z^+}\prod_{i=0}^{k-1} (1-q^{i+1})^{\frac{1}{2k}}.\label{limiR}
\end{eqnarray}
Define $c_k:=\prod_{i=0}^{k-1} (1-q^{i+1})^{\frac{1}{2k}}$. In view of \eqref{limiR}, we are done if we are able to prove that $\lim_{k\in\Z^+}c_k=1$. By routine calculations and arguments, we have
\begin{eqnarray}
0\quad <\quad c_k & < & 1,\label{boundck}\\
c_{k+1} & = & c_k^{\frac{k}{k+1}}(1-q^{k+1})^{\frac{1}{2(k+1)}},\label{recurrck}
\end{eqnarray}
for all $k\in\Z^+$. Using \eqref{boundck} and \eqref{recurrck}, it is routine to show $c_k<c_{k+1}$ for all $k\in\Z^+$. Thus, $\left(c_k\right)_{k\in\Z^+}$ is a monotonically increasing sequence of positive terms bounded above by $1$. Therefore $\lim_{k\in\Z^+}c_k=1$.
\end{proof}

For simplicity, we assume from this point onward that by any mention of the term \emph{disk}, we mean a disk in the complex plane centered at the origin.

\begin{proposition}\label{specABprop}\begin{enumerate}\item\label{specB} The spectrum of $B$ is the closed disk with radius $\frac{1}{\sqrt{1-q}}$, with the circumference as the approximate point spectrum, and with the point spectrum empty.
\item\label{specA} The spectrum of $A$ is also the closed disk with radius $\frac{1}{\sqrt{1-q}}$ which is equal to the approximate point spectrum, with the interior as the point spectrum.
\end{enumerate}
\end{proposition}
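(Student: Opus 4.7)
The plan is to leverage the weighted-shift structure of $B$ together with Proposition~\ref{riProp}, which gives $r(B)=i(B)=(1-q)^{-1/2}$. For part~\ref{specB}, I would first invoke the standard characterization of the approximate point spectrum of a unilateral weighted shift from \cite{Ridg}, which says $\sigma_{ap}(B)=\{\lambda\in\C:i(B)\leq |\lambda|\leq r(B)\}$. Since this annulus collapses to a single circle of radius $(1-q)^{-1/2}$, the first half of part~\ref{specB} follows immediately. To show $\sigma_p(B)=\emptyset$, I would suppose $B\xi=\lambda\xi$ for $\xi=\sum c_n\basvecHilb_n\in\Hilb$, compare coefficients in $\sum\alpha_n c_n\basvecHilb_{n+1}=\lambda\sum c_n\basvecHilb_n$ (where $\alpha_n=\sqrt{\{n+1\}_q}>0$), and peel off $c_0,c_1,\ldots$ one at a time.

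To fill in the interior of the disk as part of $\sigma(B)$, I would use the decomposition $\sigma(B)=\sigma_{ap}(B)\cup\sigma_r(B)$, noting that the residual spectrum satisfies $\sigma_r(B)\subseteq\{\bar\lambda:\lambda\in\sigma_p(B^*)\}=\sigma_p(A)^*$. So the main computation is identifying $\sigma_p(A)$: solving $A\xi=\lambda\xi$ by the backward action $A\basvecHilb_n=\alpha_{n-1}\basvecHilb_{n-1}$ gives the recursion $\alpha_n c_{n+1}=\lambda c_n$, hence $c_n=\lambda^n(\alpha_0\alpha_1\cdots\alpha_{n-1})^{-1}c_0$. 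Applying the root test to $\sum|c_n|^2$, and using
\begin{equation*}
\alpha_0\alpha_1\cdots\alpha_{n-1}=(1-q)^{-n/2}\prod_{i=0}^{n-1}(1-q^{i+1})^{1/2},
\end{equation*}
together with the convergence of the infinite product $\prod_{i=1}^\infty(1-q^i)$ to a positive limit (so its $n$th root tends to $1$), yields $|c_n|^{2/n}\to |\lambda|^2(1-q)$. Hence $\xi\in\Hilb$ precisely when $|\lambda|<(1-q)^{-1/2}$, so $\sigma_p(A)$ is the open disk of the claimed radius. Conjugating (the disk is invariant under conjugation) gives $\sigma_r(B)=$ the open disk, and combining with $\sigma_{ap}(B)=$ the circle yields $\sigma(B)=$ the closed disk, finishing part~\ref{specB}.

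For part~\ref{specA}, since $A=B^*$ one has $\sigma(A)=\{\bar\lambda:\lambda\in\sigma(B)\}$, which is again the closed disk of radius $(1-q)^{-1/2}$. The point spectrum computation above already identifies $\sigma_p(A)$ as the open disk. Finally, because $\sigma_p(B)=\emptyset$ and $\sigma_r(A)\subseteq\sigma_p(A^*)^*=\sigma_p(B)^*=\emptyset$, the residual spectrum of $A$ is empty, so $\sigma_{ap}(A)=\sigma(A)=$ the whole closed disk.

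The main obstacle is the quantitative growth estimate for the putative eigenvectors of $A$: I need to pin down exactly when $\lambda^n/(\alpha_0\cdots\alpha_{n-1})$ becomes square-summable, and this requires the fact that $\prod_{i=1}^{\infty}(1-q^i)$ converges to a nonzero positive real (the standard $q$-Pochhammer symbol $(q;q)_\infty$ for $0<q<1$), so that the factor $\prod_{i=0}^{n-1}(1-q^{i+1})$ neither vanishes nor blows up in the root-test limit. Everything else is routine bookkeeping with standard weighted-shift spectral theory from \cite{Halm82,Ridg}.
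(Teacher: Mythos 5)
Your argument is correct, and it reaches the same conclusions, but it fills in the interior of the disk by a genuinely different mechanism than the paper. The paper's proof leans on two cited facts about weighted shifts: Ridge's theorem that $\Pi(B)=\{c: i(B)\le|c|\le r(B)\}$ (which you also use) \emph{and} the statement from \cite[p.~351]{Ridg} that the full spectrum of the weighted shift is already the closed disk $\{c:|c|\le r(B)\}$; from $\Lambda(B)=\Pi(B)\cup\Gamma(B)$ it then reads off the compression spectrum of $B$ as the open disk and obtains $\Pi_0(A)=\Gamma(B)$ for free via the adjoint identities. You instead take the open disk as the thing to be computed: you solve the eigenvector recursion $\alpha_n c_{n+1}=\lambda c_n$ for $A$ explicitly, use the convergence of $\prod_{i\ge1}(1-q^i)$ to a positive limit to get $|c_n|^{2/n}\to|\lambda|^2(1-q)$, conclude that $\Pi_0(A)$ is the open disk, and only then transfer this back to $\Gamma(B)$ to see that the interior lies in $\Lambda(B)$. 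Your route is more self-contained --- it does not need Ridge's disk-filling result, only his description of the approximate point spectrum plus the standard adjoint dualities --- at the price of an explicit asymptotic estimate; the paper's route is shorter because it outsources exactly that estimate to the literature. One small point to tidy: the root test is inconclusive on the boundary circle $|\lambda|=(1-q)^{-1/2}$, but there $|c_n|^2=|c_0|^2/\prod_{i=1}^{n}(1-q^i)$ tends to the positive number $|c_0|^2/\prod_{i=1}^{\infty}(1-q^i)$, so the terms do not vanish and $\lambda$ is not an eigenvalue; with that observation your identification of $\Pi_0(A)$ as the \emph{open} disk is complete.
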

\begin{proof} In this proof, we make use of the traditional notation that for any operator $R$, we denote by $\Pi_0(R)$, $\Pi(R)$, $\Gamma(R)$, and $\Lambda(R)$ the \emph{point spectrum}, \emph{approximate point spectrum}, \emph{compression spectrum}, and \emph{spectrum} of $R$, respectively. We recall from \cite[p. 41]{Halm82}, from \cite[Solution 73]{Halm82}, and from \cite[Theorem 33.3]{Halm57} the following properties of the spectrum of $R$
\begin{eqnarray}
 \Lambda(R) & =  & \Pi(R)\cup\Gamma(R),\label{specR}\\
\Pi_0(R) & \subset & \Pi(R),\label{approxR}\\
\Lambda(R^*) & = & \{c^*\  :\  c\in\Lambda(R)\},\label{specRstar}\\
\Pi_0(R^*) & = & \{c^*\  :\  c\in\Gamma(R)\},\label{pointRstar}\\
\Gamma(R^*) & = & \{c^*\  :\  c\in\Pi_0(R)\}\label{compRstar}.
\end{eqnarray}

\noindent\ref{specB} Similar to the notation in the proof of Proposition~\ref{riProp}, denote the $n$th weight of $B$ by $\alpha_n=\sqrt{\frac{1-q^{n+1}}{1-q}}\neq 0$. Since none of the $\alpha_n$ vanishes, by \cite[Solution 93]{Halm82} we have that $\Pi_0(B)=\emptyset$, and by \cite[Theorem 1]{Ridg}, we have $\Pi(B) = \{c\in\C\  :\  i(B)\leq |c| \leq r(B)\}$, but by Proposition~\ref{riProp}, we further have $\Pi(B)= \left\{c\in\C\  :\   |c|=\frac{1}{\sqrt{1-q}}\right\}$. By \cite[p. 351]{Ridg}, we have $\Lambda(B)=\{c\in\C\  :\   |c|\leq r(B)\}=\left\{c\in\C\  :\   |c|\leq\frac{1}{\sqrt{1-q}}\right\}$. Then by \eqref{specR}, we have $\Gamma(B)=\left\{c\in\C\  :\   |c|<\frac{1}{\sqrt{1-q}}\right\}$. 

\noindent\ref{specA} By \ref{specB}, the set $\Gamma(B)$ has circular symmetry by conjugation, i.e., $\{c^*\  :\  c\in\Gamma(B)\}=\Gamma(B)$, and so by \eqref{pointRstar}, we have $\Pi_0(A)=\Pi_0(B^*)=\Gamma(B)$. Since $\Lambda(B)$ also has circular symmetry,  by \eqref{specRstar}, we have $\Lambda(A)=\Lambda(B^*)=\Lambda(B)$. Since $\Pi_0(B)$ is empty, by \eqref{compRstar}, we have $\Gamma(A)=\Gamma(B^*)=\emptyset$. Then by \eqref{specR}, we have $\Pi(A)=\Lambda(A)$.
\end{proof}

\begin{lemma}\label{noncompactLem} A finite linear combination in $\approxLie$ of the form $$B+c_2B^2+c_3B^3+\cdots+c_kB^k,$$ with $c_k\neq 0$ is not compact.
\end{lemma}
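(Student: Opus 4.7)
The plan is to test the candidate operator $T:=B+c_2B^2+\cdots+c_kB^k$ on the orthonormal basis $\{\basvecHilb_n\}_{n\in\N}$ and exploit the standard criterion that a compact operator must send weakly null sequences to norm null sequences. Since $\{\basvecHilb_n\}$ is orthonormal it is weakly null in $\Hilb$, so it suffices to show $\norm{T\basvecHilb_n}\not\to 0$ as $n\to\infty$.

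First I would compute $B^j\basvecHilb_n$ explicitly from the definition of the weighted shift $B$ with weights $\alpha_n=\sqrt{\{n+1\}_q}$, obtaining
\begin{equation}
B^j\basvecHilb_n \;=\; \left(\prod_{i=0}^{j-1}\alpha_{n+i}\right)\basvecHilb_{n+j} \;=\; \left(\prod_{i=0}^{j-1}\sqrt{\{n+i+1\}_q}\right)\basvecHilb_{n+j}.\nonumber
\end{equation}
Then I would apply $T$ to $\basvecHilb_n$ and note that the resulting summands lie in the mutually orthogonal one-dimensional subspaces spanned by $\basvecHilb_{n+1},\basvecHilb_{n+2},\ldots,\basvecHilb_{n+k}$; taking $c_1:=1$, this yields by the Pythagorean identity
\begin{equation}
\norm{T\basvecHilb_n}^2 \;=\; \sum_{j=1}^{k}|c_j|^2\prod_{i=0}^{j-1}\{n+i+1\}_q.\nonumber
\end{equation}

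Next I would pass to the limit $n\to\infty$. Because $0<q<1$, we have $\{m\}_q=\frac{1-q^m}{1-q}\to\frac{1}{1-q}$ as $m\to\infty$, and each finite product $\prod_{i=0}^{j-1}\{n+i+1\}_q$ therefore converges to $(1-q)^{-j}$. Hence
\begin{equation}
\lim_{n\to\infty}\norm{T\basvecHilb_n}^2 \;=\; \sum_{j=1}^{k}|c_j|^2(1-q)^{-j},\nonumber
\end{equation}
which is strictly positive since the $k$-th summand is $|c_k|^2(1-q)^{-k}>0$ by hypothesis on $c_k$.

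Finally I would invoke the characterization of compactness: if $T$ were compact, then from $\basvecHilb_n\to 0$ weakly it would follow that $T\basvecHilb_n\to 0$ in norm, contradicting the strictly positive limit computed above. The only genuine obstacle is the interchange of the limit with the finite product of $q$-integers, but this is immediate because only finitely many factors are involved and each is a convergent bounded sequence; no uniformity issue arises. Thus $T$ cannot be compact, completing the proof.
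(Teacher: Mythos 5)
Your proof is correct, but it takes a genuinely different route from the paper's. The paper argues spectrally: it sets $p(z)=z+c_2z^2+\cdots+c_kz^k$, uses the Spectral Mapping Theorem together with Proposition~\ref{specABprop}\ref{specB} to get $\Lambda(p(B))=p(\Lambda(B))$ and $\Pi_0(p(B))=\emptyset$, and then invokes Bloch's Theorem to produce a nonzero point of $p(\Lambda(B))$; since a compact operator's nonzero spectrum consists of eigenvalues, $p(B)$ cannot be compact. You instead use the complete continuity of compact operators on the weakly null orthonormal sequence $\left(\basvecHilb_n\right)_{n\in\N}$ and compute $\norm{T\basvecHilb_n}^2=\sum_{j=1}^{k}|c_j|^2\prod_{i=0}^{j-1}\{n+i+1\}_q$ directly from the shift structure and the Pythagorean identity; the limit $\sum_{j=1}^{k}|c_j|^2(1-q)^{-j}>0$ then gives the contradiction. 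Your argument is more elementary and self-contained --- it avoids Bloch's Theorem and the spectral computations entirely, and in fact you do not even need the limit, since $\{m\}_q\geq 1$ for $m\geq 1$ already gives the uniform lower bound $\norm{T\basvecHilb_n}^2\geq|c_k|^2$. What the paper's approach buys is additional spectral information about $p(B)$ (its full spectrum and empty point spectrum), which is in keeping with the operator-theoretic theme of Section~\ref{HilbRepSec}, but for the compactness statement alone your route is shorter and arguably cleaner.
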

\begin{proof} Let $p(z):=z+c_2z^2+c_3z^3+\cdots+c_kz^k$, and we use the notation in the proof of Proposition~\ref{specABprop} for the spectra and spectral subsets of operators. By the Spectral Mapping Theorem \cite[Theorem 33.1]{Halm57}, we find that $\Lambda(p(B))=p(\Lambda(B)):=\{p(c)\  :\  c\in\Lambda(B)\}$, which by Proposition~\ref{specABprop}\ref{specB} is equal to
\begin{eqnarray}
\left\{p(c)\  :\  |c|\leq\frac{1}{\sqrt{1-q}}\right\}. \label{Bloch}
\end{eqnarray}
We recall that by \cite[p. 96]{Halm82}, if $\Lambda(p(B))$ has a nonzero element that is not in $\Pi_0(p(B))$, then $p(B)$ is not compact and we are done. But also by the Spectral Mapping Theorem and Proposition~\ref{specABprop}\ref{specB}, we have $\Pi_0(p(B))=\emptyset$, and so our task is reduced to proving that \eqref{Bloch} has a nonzero element. To do this, we make use of rudimentary properties of analytic functions. The polynomial $p$ viewed as a function of a complex variable is analytic, and is defined on any region containing the open unit disk in the complex plane. Also, we have $p(0)=0$, and the derivative has the property $p'(0)=1$. By Bloch's Theorem \cite[Theorem XII.1.4]{Conw78}, there exists an open disk $S$ contained in the open unit disk on which $p$ is injective and such that $p(S)$ contains an open disk $Q$ of radius $\frac{1}{72}$. Because the radius of the closed disk $\Lambda(B)$ exceeds $1$, we find that $S\subset\Lambda(B)$, which implies that $p(S)\subset p(\Lambda(B))= \Lambda(p(B))$, which is equal to \eqref{Bloch}, and in the subset $Q$ of $p(S)$, we can certainly find a nonzero complex number. Thus, \eqref{Bloch} has a nonzero element, and this completes the proof.
\end{proof}

\begin{corollary}\label{LaurentCor} The image of $\Heisen$ in the Calkin algebra on $\Hilb$ is algebraically isomorphic to the complex Laurent polynomial algebra in one indeterminate, with basis consisting of all the integral powers of the image of $B$. 
\end{corollary}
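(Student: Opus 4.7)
The plan is to construct an algebra isomorphism from the complex Laurent polynomial algebra $\C[z,z^{-1}]$ onto the image of $\Heisen$ in the Calkin algebra. Let $\pi$ denote the canonical projection from $\OpAlg$ onto the Calkin algebra on $\Hilb$. Since Proposition~\ref{FredProp}\ref{Calkin3} furnishes the invertibility $\pi(B)^{-1}=(1-q)\pi(A)$, the assignment $z\mapsto\pi(B)$ extends uniquely to a well-defined unital algebra homomorphism $\phi:\C[z,z^{-1}]\to\pi(\Heisen)$, and the task reduces to showing that $\phi$ is both surjective and injective.

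Surjectivity is straightforward. Using the direct-sum decomposition $\Heisen=\preLie\oplus\approxLie$ established in Section~\ref{AlgSec}, Lemma~\ref{LieABlem}\ref{Calkin1} forces $\pi(\derpreLie)=\{0\}$, so $\pi(\Heisen)$ is spanned by $\pi(\algI)$, $\pi(A)$, $\pi(B)$, together with $\pi(A^l),\pi(B^l)$ for $l\geq 2$. Rewriting $\pi(A^l)=(1-q)^{-l}\pi(B)^{-l}$ for each $l\geq 1$ (a consequence of Proposition~\ref{FredProp}\ref{Calkin3} applied inside the homomorphism $\pi$), this spanning set collapses to $\{\pi(B)^n:n\in\Z\}$, which is exactly the image of $\phi$.

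The main obstacle is injectivity, which is where the spectral input from the previous lemmas becomes essential. Suppose $\phi(f)=0$ for a nonzero Laurent polynomial $f(z)=\sum_{n=-M}^N c_n z^n$ with $c_{-M}\neq 0$. Multiplying the relation $f(\pi(B))=0$ by the invertible element $\pi(B)^M$ converts it into $g(\pi(B))=0$, where $g(z):=z^M f(z)$ is an ordinary polynomial with $g(0)=c_{-M}\neq 0$. Equivalently, $g(B)$ is a compact operator on $\Hilb$. I would then adapt the spectral argument of Lemma~\ref{noncompactLem} to polynomials with nonzero constant term: by the Spectral Mapping Theorem together with Proposition~\ref{specABprop}\ref{specB}, $\Lambda(g(B))=g(\overline{D})$, where $\overline{D}$ is the closed disk of radius $1/\sqrt{1-q}$. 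If $g$ is a nonzero constant, then $g(B)$ is a nonzero scalar multiple of $\algI$, which is not compact because $\Hilb$ is infinite-dimensional. Otherwise $g$ is nonconstant, and Bloch's theorem (as invoked in Lemma~\ref{noncompactLem}) guarantees that $g$ maps some open disk in $\overline{D}$ to an open subset of $\C$; hence $\Lambda(g(B))$ has nonempty interior and is uncountable, contradicting the fact that a compact operator has at most countable spectrum whose only possible accumulation point is $0$. It follows that $f=0$, so $\phi$ is an isomorphism and the integer powers of $\pi(B)$ form a basis of $\pi(\Heisen)$.
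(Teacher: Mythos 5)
Your proposal follows essentially the same route as the paper: surjectivity of the map onto $\pi(\Heisen)$ comes from the compactness of $\derpreLie$ (Lemma~\ref{LieABlem}\ref{Calkin1}) together with $\pi(B)^{-1}=(1-q)\pi(A)$ from Proposition~\ref{FredProp}\ref{Calkin3}, and injectivity is reduced to the non-compactness of a polynomial in $B$, established through the spectrum of $B$ and the Spectral Mapping Theorem. The one place where you diverge is the normalization in the injectivity step. The paper multiplies a hypothetical vanishing combination $\sum_{i=a}^{b}c_i\pi(B)^i$ by $c_a^{-1}\pi(B)^{1-a}$, producing a polynomial of the exact form $z+e_2z^2+\cdots+e_kz^k$ with $e_k\neq 0$, which is precisely what Lemma~\ref{noncompactLem} covers, so it can be cited verbatim. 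You instead clear denominators by $\pi(B)^{M}$ to get a polynomial $g$ with $g(0)\neq 0$, which Lemma~\ref{noncompactLem} does not cover, so you are forced to reprove a variant; and your sketch of that variant is slightly misattributed, since Bloch's Theorem as invoked in Lemma~\ref{noncompactLem} requires $g(0)=0$ and $g'(0)=1$, neither of which your $g$ need satisfy. The gap is easily repaired --- for nonconstant $g$ the Open Mapping Theorem for analytic functions (or Bloch applied to a translate and rescaling of $g$ at a point where $g'$ does not vanish) already shows that $\Lambda(g(B))=g(\Lambda(B))$ has nonempty interior, and your countability argument for the spectrum of a compact operator then finishes the case --- but as written the appeal to ``Bloch's theorem as invoked in Lemma~\ref{noncompactLem}'' does not literally apply. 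The simplest fix is to normalize exactly as the paper does, after which Lemma~\ref{noncompactLem} applies with no further analytic work.
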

\begin{proof} Denote by $\pi$ the canonical map of $\OpAlg$ onto the Calkin algebra on $\Hilb$, and let $D:=\pi(B)$. Let $X\in\Heisen$. Since we are to prove only an algebraic property, assuming that $X$ is a finite linear combination of \eqref{Heisenbasis} would suffice.  By Lemma~\ref{LieABlem}\ref{Calkin1}, we find that $X=Y+Z$ for some compact $Y$ and some $Z$ which is a finite linear combination of powers of $A,B$. Since $Y$ is compact, we have $\pi(Y)=0$, and so $\pi(X)=\pi(Z)$. Recall from Proposition~\ref{FredProp}\ref{Calkin3} that $\pi(A)$ is a scalar multiple of $D^{-1}$ and so $\pi(Z)$ is a finite linear combination of integral powers of $D$. Thus $\pi(X)$, and hence $\pi(\Heisen)$, is contained in the subalgebra of $\OpAlg$ generated by $D,D^{-1}$. To complete the proof, we show that no finite linear combination of integral powers of $D$ is zero. Suppose otherwise. Then for some integers $a,b$ with $a\leq b$ and some scalars $c_a, c_{a+1},\ldots,c_b$ with $c_a$ and $c_b$ both nonzero, we have
\begin{eqnarray}
\sum_{i=a}^bc_iD^i=0.\label{trivlincom}
\end{eqnarray}
By multiplying both sides of \eqref{trivlincom} by $c_a^{-1}D^{1-a}$, we find that the relation
$$D+e_2D^2+e_3D^3+\cdots+e_{1+b-a}D^{1+b-a}=0,$$
holds in the Calkin algebra for some scalars $e_1,e_2,\ldots,e_{1+b-a}$, with $e_{1+b-a}\neq 0$. This implies that in $\Heisen$, the operator
$$B+e_2B^2+e_3B^3+\cdots+e_{1+b-a}B^{1+b-a}$$
with $e_{1+b-a}\neq 0$ is compact, contradicting Lemma~\ref{noncompactLem}. Therefore, $D$ does not satisfy a relation of the form \eqref{trivlincom}, and so there is an associative algebra isomorphism of $\pi(\Heisen)$ onto the algebra $\C[x,x^{-1}]$ of complex Laurent polynomials on the indeterminate $x$ that performs the mapping $D\mapsto x$. Furthermore, the integral powers of $D$ form a basis for $\pi(\Heisen)$ .
\end{proof}

\section{Characterizing Lie polynomial operators by compactness}\label{ApproxSec} 
In this section, we prove our main result that $\HeisenLie$ is precisely $\preLie$. A significant component of the proof is our knowledge about the image of $\Heisen$ in the Calkin algebra over $\Hilb$, which we obtained using information about the spectrum of $B$. 

Given an element $U$ of the Lie algebra $\Heisen$, recall the linear map $\ad U:\Heisen\rightarrow\Heisen$ defined by $V\mapsto\lbrack U,V\rbrack$. Observe that for every $U\in\HeisenLie$, there exist $U_1,U_2,\ldots,U_k, W\in\{A,B\}$ such that 
\begin{eqnarray}
U =(\ad U_1)\circ(\ad U_1)\circ\cdots\circ(\ad U_1)(W).\label{adform}
\end{eqnarray}
Conversely, every element of $\Heisen$ expressible in the form \eqref{adform} is an element of $\HeisenLie$. Take for instance $(\ad A)^2(B)=\lbrack A,\LieAB\rbrack=A\LieAB -\LieAB A$, which by \eqref{adconstruct}, is a scalar multiple of $\LieAB A^2$. Extending this process, every basis vector of $\preLie$ of the form $\LieAB^kA^l$ can be expressed as an element of $\HeisenLie$. Similar routine computations can be used to do the same for those of the form $B^l
\LieAB^k$. Those of the form $\LieAB^k$ can also be expressed as elements of $\HeisenLie$. We summarize in the following lemma the relations that shall hint the reader into the said routine computations.

\begin{lemma}\label{LiePolyLem} Let $k\in\N$ and $l\in\Z^+$. Define $$\baseG(k)  :=  \left(\left(\ad B\right)\circ\left(-\ad \LieAB\right)^{k}\right)\left( \lbrack \LieAB,A\rbrack\right).$$ The  relations 
\begin{eqnarray}
\LieAB^{k+1}A^l & = & -\frac{\left(\left(-\ad \LieAB\right)^k\circ\left(-\ad A\right)^{l+1}\right)\left( B\right)}{(1-q)^l(q^l-1)^k},\label{baseArel}\\
B^l\LieAB^{k+1} & = & \frac{\left(\left(\ad B\right)^{l-1}\circ\left(\ad \LieAB\right)^{k}\right)\left( \lbrack\LieAB,B\rbrack\right)}{(q-1)^{k+1}(1-q^{k+1})^{l-1}},\label{baseBrel}\\
\baseG(k) & = &  q^{-k}(q-1)^{k+1}\{k+1\}_q\LieAB^{k+2}\nonumber\\
& & -q^{1-k}(q-1)^{k+1}\{k\}_q\LieAB^{k+1},\label{baseGrel}
\end{eqnarray}
hold in $\Heisen$. Furthermore, the commutators $\baseG(k)$ satisfy the relation
\begin{eqnarray}
\LieAB^{k+2} = \frac{q^k}{\{k+1\}_q}\sum_{i=0}^k(q-1)^{-(i+1)}\baseG(i) .\label{baseGrel2}
\end{eqnarray}
\end{lemma}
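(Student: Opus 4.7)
The plan is to derive all four identities from three elementary commutation relations in $\Heisen$ that follow at once from $AB-qBA=\algI$: namely $BA=(q-1)^{-1}(\LieAB-\algI)$, $A\LieAB=q\LieAB A$, and $\LieAB B=qB\LieAB$. The first is a rearrangement of the defining relation; the latter two are obtained by expanding $A(AB-BA)$ and $(AB-BA)B$ respectively and then simplifying with $AB=qBA+\algI$. A one-step induction upgrades these to the \emph{sliding} relations $A^l\LieAB^k=q^{kl}\LieAB^k A^l$ and $\LieAB^k B^l=q^{kl}B^l\LieAB^k$, which will do essentially all the work.

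For (i), I will first induct on $l$ to obtain $(-\ad A)^{l+1}(B)=-(1-q)^l\LieAB A^l$; the inductive step uses $[A,\LieAB A^l]=A\LieAB A^l-\LieAB A^{l+1}=(q-1)\LieAB A^{l+1}$, immediate from sliding. I will then induct on $k$ to obtain $(-\ad\LieAB)^k(\LieAB A^l)=(q^l-1)^k\LieAB^{k+1}A^l$ via $[\LieAB,\LieAB^{j+1}A^l]=(1-q^l)\LieAB^{j+2}A^l$, again by sliding. Composing and solving for $\LieAB^{k+1}A^l$ yields (i). The proof of (ii) is parallel: starting from $[\LieAB,B]=(q-1)B\LieAB$, the chain $(\ad\LieAB)^k$ produces $(q-1)^{k+1}B\LieAB^{k+1}$ (since $[\LieAB,B\LieAB^m]=(q-1)B\LieAB^{m+1}$), after which $(\ad B)^{l-1}$ produces $(1-q^{k+1})^{l-1}B^l\LieAB^{k+1}$ (since $[B,B^j\LieAB^{k+1}]=(1-q^{k+1})B^{j+1}\LieAB^{k+1}$); solving gives (ii).

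For (iii), the inner chain mirrors the one in (i): $(-\ad\LieAB)^k([\LieAB,A])$ reduces to a scalar multiple of $\LieAB^{k+1}A$, namely $-(q-1)^{k+1}\LieAB^{k+1}A$. The substantive computation is then the single outer bracket $[B,\LieAB^{k+1}A]$. The plan is to expand it as $B\LieAB^{k+1}A-\LieAB^{k+1}AB$, slide the leading $B$ past $\LieAB^{k+1}$ to pull out the scalar $q^{-(k+1)}$, and then use $BA=(q-1)^{-1}(\LieAB-\algI)$ to convert the dangling $BA$ into a polynomial in $\LieAB$; analogously, replace $AB$ in the second summand by $qBA+\algI$ and apply the same substitution. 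This expresses $[B,\LieAB^{k+1}A]$ as a $\C$-linear combination of $\LieAB^{k+2}$ and $\LieAB^{k+1}$, and the coefficients collapse into the claimed form via $q$-integer identities such as $\{m+1\}_q-\{m\}_q=q^m$. The bookkeeping of $q$-powers and $q$-integers here is the main obstacle; the underlying manipulation is otherwise mechanical.

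Finally, (iv) follows from (iii) by telescoping. Dividing (iii) through by $(q-1)^{i+1}$, the summand $(q-1)^{-(i+1)}\baseG(i)$ equals $b_{i+1}-b_i$, where $b_i:=q^{1-i}\{i\}_q\LieAB^{i+1}$. Summing from $i=0$ to $k$ collapses to $b_{k+1}-b_0=q^{-k}\{k+1\}_q\LieAB^{k+2}$, the term $b_0$ vanishing because $\{0\}_q=0$. Multiplying through by $q^k/\{k+1\}_q$ then isolates $\LieAB^{k+2}$, giving (iv).
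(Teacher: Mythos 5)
Your reductions for \eqref{baseArel} and \eqref{baseBrel} are correct and complete, and your overall strategy (induct on the exponents using $BA=(q-1)^{-1}(\LieAB-\algI)$, $A\LieAB=q\LieAB A$, $\LieAB B=qB\LieAB$) is exactly what the paper's one-line proof prescribes. The gap is in the one step you wave through for \eqref{baseGrel}: the assertion that the coefficients ``collapse into the claimed form.'' They do not. Carrying out your own outline: $\left(-\ad\LieAB\right)^k([\LieAB,A])=-(q-1)^{k+1}\LieAB^{k+1}A$ is right, and then
$$\lbrack B,\LieAB^{k+1}A\rbrack=\frac{(q^{-(k+1)}-1)\LieAB^{k+1}+(q-q^{-(k+1)})\LieAB^{k+2}}{1-q},$$
so that
$$\baseG(k)=q^{-(k+1)}(q-1)^{k+1}\left(\{k+2\}_q\LieAB^{k+2}-\{k+1\}_q\LieAB^{k+1}\right),$$
which is an index shift away from the right-hand side of \eqref{baseGrel}. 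The discrepancy is real, not an algebra slip on your part: at $k=0$ one computes directly in the faithful representation that $\lbrack B,\lbrack\LieAB,A\rbrack\rbrack\basvecHilb_1=(1-q)(1-q-q^2)\basvecHilb_1$, whereas the stated right-hand side $(q-1)\{1\}_q\LieAB^2$ gives $(q-1)q^2\basvecHilb_1$; these agree only if $q=1$. So \eqref{baseGrel} as printed cannot be proved, and a correct proof must either establish the corrected identity above or explain why the printed one holds (it doesn't).

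Your step (iv) inherits the problem. The telescoping with $b_i=q^{1-i}\{i\}_q\LieAB^{i+1}$ is formally valid \emph{given} \eqref{baseGrel}, but since \eqref{baseGrel} fails, so does \eqref{baseGrel2} as stated (check $k=0$: $(q-1)^{-1}\baseG(0)=-q^{-1}\LieAB+q^{-1}(1+q)\LieAB^2\neq\LieAB^2$). With the corrected form of $\baseG(k)$, the sum $\sum_{i=0}^k q^{i+1}(q-1)^{-(i+1)}\baseG(i)$ telescopes to $\{k+2\}_q\LieAB^{k+2}-\LieAB$, so the usable conclusion is $\LieAB^{k+2}=\{k+2\}_q^{-1}\bigl(\LieAB+\sum_{i=0}^k q^{i+1}(q-1)^{-(i+1)}\baseG(i)\bigr)$, which still exhibits $\LieAB^{k+2}$ as a Lie polynomial in $A,B$ (since $\LieAB=\lbrack A,B\rbrack$) and therefore still serves the lemma's purpose in the main theorem. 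You should redo the coefficient bookkeeping in (iii), state the corrected identities, and adjust (iv) accordingly rather than asserting agreement with the printed formulas.
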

\begin{proof} For each of \eqref{baseArel} to \eqref{baseGrel2}, use induction on the relevant exponents.
\end{proof}

\begin{lemma}\label{derIdLem} The vector space $\derpreLie$ is a Lie ideal of $\Heisen$.
\end{lemma}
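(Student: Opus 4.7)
The plan is to prove the slightly stronger statement that $\derpreLie$ is in fact a two-sided associative ideal of $\Heisen$; this automatically implies it is a Lie ideal under the commutator bracket $[f,g] = fg - gf$. The engine of the argument is the pair of ``quantum commutation'' identities $A\LieAB = q\LieAB A$ and $\LieAB B = qB\LieAB$, each verified by a direct calculation that uses the defining relation $AB = qBA + \algI$ twice in order to reduce $A^2B$, $ABA$, $AB^2$, and $BAB$ to normal form. From these, an elementary induction delivers the moving rules $A^i\LieAB^k = q^{ik}\LieAB^k A^i$ and $\LieAB^k B^j = q^{jk}B^j\LieAB^k$, together with the classical commutation identities $A^l B = q^l BA^l + \{l\}_q A^{l-1}$ and $AB^l = q^l B^l A + \{l\}_q B^{l-1}$, and the single-step reduction $BA = (q-1)^{-1}(\LieAB - \algI)$.

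Equipped with this toolbox, one checks case by case that for every basis element $X$ of $\derpreLie$ listed in \eqref{preLiebasis} (so $X$ is one of $\LieAB^k$, $\LieAB^k A^l$, or $B^l\LieAB^k$ with $k,l \in \Z^+$), each of the four products $AX$, $XA$, $BX$, $XB$ is again a finite linear combination of the spanning vectors of $\derpreLie$. Many cases are immediate, for instance $A \cdot \LieAB^k A^l = q^k\LieAB^k A^{l+1}$ or $B \cdot B^l\LieAB^k = B^{l+1}\LieAB^k$. The two subtler cases are $\LieAB^k A^l \cdot B$ and $B^l\LieAB^k \cdot A$: here one first uses the moving rules to push the isolated $B$ (respectively $A$) past the long factor $A^l$ (respectively $B^l$), then applies $BA = (q-1)^{-1}(\LieAB - \algI)$ to absorb the resulting $BA$-piece into higher powers of $\LieAB$. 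The output is always a combination of $\LieAB^{k+1} A^{l-1}$, $\LieAB^k A^{l-1}$, $B^{l-1}\LieAB^{k+1}$, or $B^{l-1}\LieAB^k$, each of which is a basis vector of $\derpreLie$.

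Once closure of $\derpreLie$ under left and right multiplication by $A$ and by $B$ is established, a short induction on the length of a monomial in $A,B$, combined with linearity, extends this closure to multiplication by every element of $\Heisen$; hence $\derpreLie$ is a two-sided associative ideal of $\Heisen$ and in particular a Lie ideal. The one point in the case analysis that demands attention is the edge exponent $l = 1$, where the ``leftover'' factor $A^{l-1}$ or $B^{l-1}$ becomes $\algI$: the output then collapses to an element of the form $\LieAB^{k+1}$ or $\LieAB^k$, which is still a basis element of $\derpreLie$ rather than an unwanted pure power of $A$ or $B$ that would escape into the complementary span $\Span\{A,B\}$.
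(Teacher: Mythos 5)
Your argument is correct, and it reaches the conclusion by a genuinely different route from the paper. The paper's proof is spectral: it invokes Proposition~\ref{diamondProp} to write $\lbrack X,Y\rbrack = Z + \sum_i c_iA^i + \sum_j e_jB^j$ with $Z\in\derpreLie$, observes that $X$, $Z$, and hence $\lbrack X,Y\rbrack$ are compact, pushes the identity into the Calkin algebra, and then uses Corollary~\ref{LaurentCor} (linear independence of the integral powers of $\pi(B)$, which ultimately rests on the Bloch's-theorem argument of Lemma~\ref{noncompactLem}) to force all $c_i,e_j$ to vanish. You instead verify closure of $\derpreLie$ under left and right multiplication by $A$ and $B$ directly from the reduction relations $A\LieAB=q\LieAB A$, $\LieAB B=qB\LieAB$, and $BA=(q-1)^{-1}(\LieAB-\algI)$; I checked the representative cases, including $\LieAB^kA^l\cdot B$ and $B^l\LieAB^k\cdot A$ and the edge exponent $l=1$, and each product does land in $\derpreLie$ because the $\algI$ produced by the $BA$- or $AB$-reduction is always multiplied by a surviving factor $\LieAB^k$ with $k\geq 1$. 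Your approach buys three things: it is purely algebraic and self-contained (no Hilbert space representation, no compactness, no Calkin algebra), it works over any field with $q\neq 1$ rather than only for $q\in\left]0,1\right[$, and it yields the strictly stronger conclusion that $\derpreLie$ is a two-sided associative ideal of $\Heisen$, of which the Lie ideal property is an immediate corollary. What the paper's route buys is economy within its own framework --- the compactness and Calkin-algebra machinery is already built for Corollary~\ref{LaurentCor}, so the Lie-ideal property falls out with no case analysis --- and it showcases the paper's central theme that analytic data (spectra, compactness) can decide algebraic membership questions. One small caution on your write-up: in the case $\LieAB^kA^l\cdot B$, pushing $B$ leftward past $A^l$ via $A^lB=q^lBA^l+\{l\}_qA^{l-1}$ leaves a term $\LieAB^kBA^l$ that still needs one further application of the moving rule and of the $BA$-reduction before it is in normal form; it is cleaner to peel off a single factor, $A^lB=A^{l-1}(AB)$ with $AB=(1-q)^{-1}(\algI-q\LieAB)$, and then commute $\LieAB$ past $A^{l-1}$. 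This is a presentational point only; the computation terminates in $\derpreLie$ either way.
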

\begin{proof} We rewrite the basis elements of $\Heisen$ in \eqref{Heisenbasis} as 
\begin{eqnarray}
\LieAB^k,\quad\LieAB^kA^l, \quad B^l\LieAB^k, & \label{idbasis}\\
\algI, \quad A^l, \quad B^l. & \quad\quad (k,l\in\Z^+) \label{otherbasis}
\end{eqnarray}
Recall from Lemma~\ref{LieABlem}\ref{Calkin1} that the operators in \eqref{idbasis} form a basis for $\derpreLie$. Let $X$ be one of the operators in \eqref{idbasis}, and let $Y$ be any of the operators in \eqref{idbasis},\eqref{otherbasis}. Our goal is to show $\lbrack X,Y\rbrack\in\derpreLie$. By Proposition~\ref{diamondProp}, each of $XY$, $YX$, and hence $\lbrack X,Y\rbrack=XY-YX$, is a finite linear combination of \eqref{idbasis},\eqref{otherbasis}. We denote this finite linear combination by
\begin{eqnarray}
\lbrack X,Y\rbrack = Z + \sum_{i=1}^m c_i A^i + \sum_{j=0}^n e_j B^j,\label{ideq}
\end{eqnarray}
for some  finite linear combination $Z$ of \eqref{idbasis},  and some $m,n\in\Z^+$, where the $c_i$ and $e_j$ are scalars. Thus, $Z\in\derpreLie$. By Lemma~\ref{LieABlem}\ref{Calkin1}, each of the operators $X$, $Z$, and hence also $\lbrack X,Y\rbrack$, is compact. Denote by $\pi$ the canonical map of $\OpAlg$ onto the Calkin algebra on $\Hilb$, and let $D:=\pi(B)$. Then applying $\pi$ on both sides of \eqref{ideq}, we have
\begin{eqnarray}
0 =\sum_{i=1}^m c_i (1-q)^{-i}D^{-i} + \sum_{j=0}^n e_j D^j,\nonumber
\end{eqnarray}
from which we deduce, by Corollary~\ref{LaurentCor}, that all the $c_i$ and $e_j$ are zero. Then \eqref{ideq} becomes $$\lbrack X,Y\rbrack = Z\in\derpreLie.\qedhere$$
\end{proof}

\begin{theorem} The Lie subalgebra $\HeisenLie$ of $\Heisen$ is precisely $\preLie$.
\end{theorem}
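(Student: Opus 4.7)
The plan is to prove both inclusions $\preLie \subseteq \HeisenLie$ and $\HeisenLie \subseteq \preLie$ separately, with essentially all the hard work already delegated to the lemmas just proved.

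For $\preLie \subseteq \HeisenLie$, I would exhibit each element of the spanning set of Definition~\ref{preLieDef} as a Lie polynomial in $A, B$. The generators $A$ and $B$ lie in $\HeisenLie$ by definition, and $\LieAB = \lbrack A, B\rbrack$ is trivially a Lie polynomial. For $\LieAB^k$ with $k \geq 2$, for $\LieAB^k A^l$, and for $B^l \LieAB^k$ with $k, l \geq 1$, the identities \eqref{baseArel}, \eqref{baseBrel}, and \eqref{baseGrel2} of Lemma~\ref{LiePolyLem} already present each such element as a scalar multiple of a nested commutator of $A$ and $B$ (together with, in the case of \eqref{baseGrel2}, a linear combination of nested commutators $\baseG(i)$). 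The scalar denominators that appear are all nonzero because $0 < q < 1$ forces the relevant $q$-integers $\{m\}_q$ and factors of the form $(1 - q^m)$ to be nonzero.

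For the reverse inclusion, it suffices, since $A, B \in \preLie$, to check that $\preLie$ itself is closed under the Lie bracket. Using the decomposition $\preLie = \Span\{A, B\} + \derpreLie$ from Lemma~\ref{LieABlem}\ref{Calkin1}, I would take arbitrary elements $X = x_1 A + x_2 B + X'$ and $Y = y_1 A + y_2 B + Y'$ of $\preLie$ with $X', Y' \in \derpreLie$, and expand $\lbrack X, Y\rbrack$ by bilinearity. The ``leading'' term $\lbrack x_1 A + x_2 B, y_1 A + y_2 B\rbrack = (x_1 y_2 - x_2 y_1)\LieAB$ lies in $\derpreLie$, while each of the three cross terms $\lbrack x_1 A + x_2 B, Y'\rbrack$, $\lbrack X', y_1 A + y_2 B\rbrack$, and $\lbrack X', Y'\rbrack$ lies in $\derpreLie$ by Lemma~\ref{derIdLem}, which asserts that $\derpreLie$ is a Lie ideal of all of $\Heisen$. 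Therefore $\lbrack X, Y\rbrack \in \derpreLie \subseteq \preLie$, so $\preLie$ is a Lie subalgebra containing $A$ and $B$, which forces $\HeisenLie \subseteq \preLie$.

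The genuinely substantive step in this argument is not the theorem itself but the preceding Lemma~\ref{derIdLem}: there, one had to invoke the Calkin-algebra structure theorem (Corollary~\ref{LaurentCor}) to force the coefficients of every $A^i$ and $B^j$ term appearing in $\lbrack X, Y\rbrack$ to vanish whenever $X \in \derpreLie$. Once that ideal property is granted, the present theorem reduces to the two-step bookkeeping above, driven entirely by Lemma~\ref{LiePolyLem} on one side and Lemma~\ref{derIdLem} on the other.
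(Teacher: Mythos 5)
Your proposal is correct and follows essentially the same route as the paper's own proof: the forward inclusion via the explicit identities of Lemma~\ref{LiePolyLem}, and the reverse inclusion by showing $\preLie$ is a Lie subalgebra containing $A,B$ using the decomposition $\preLie=\Span\{A,B\}+\derpreLie$ together with the ideal property from Lemma~\ref{derIdLem}. The only cosmetic difference is that the paper groups the bracket expansion as $\lbrack Z_1,Y\rbrack+\lbrack c_1A+c_2B,Z_2\rbrack+(c_1c_4-c_2c_3)\LieAB$ rather than your four-term split, which amounts to the same computation.
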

\begin{proof} We first show that $\preLie$ is a Lie subalgebra of $\Heisen$. Let $X,Y\in\preLie$. By Lemma~\ref{LieABlem}\ref{Calkin1}, there exist $Z_1,Z_2\in\derpreLie$ and scalars $c_1,c_2,c_3,c_4$ such that $X=Z_1+c_1A+c_2B$ and $Y=Z_2+c_3A+c_4B$. By some routine calculations, we have
\begin{eqnarray}
\lbrack X,Y\rbrack = \lbrack Z_1,Y\rbrack + \lbrack c_1A+c_2B, Z_2\rbrack + (c_1c_4-c_2c_3)\LieAB.\label{subalgeq}
\end{eqnarray}
By Lemma~\ref{derIdLem}, the first two terms in the right-hand side of \eqref{subalgeq} are in $\derpreLie\subset\preLie$, while the third term, by Definition~\ref{preLieDef}, is a basis element of $\preLie$. Thus, $\lbrack X,Y\rbrack\in\preLie$, and so $\preLie$ is a Lie subalgebra of $\Heisen$. By Lemma~\ref{LiePolyLem}, each basis element of $\preLie$ is in $\HeisenLie$. Thus, $\preLie\subset\HeisenLie$. Also, by Definition~\ref{preLieDef}, $A,B\in\preLie$. That is, $\preLie$ is a Lie subalgebra of $\Heisen$ that contains $A,B$ and is contained in $\HeisenLie$. Then by Definition~\ref{LiegenDef}, we have $\preLie=\HeisenLie$.
\end{proof}

Thus, we now have the direct sum decomposition $\Heisen=\HeisenLie+\approxLie$ of vector spaces, where all the Lie polynomials in $A,B$ are in the first direct summand. It turns out that the combination of the algebraic and metric structures in $\Heisen$ has this interesting property that even if the elements of $\approxLie$ are not Lie polynomials in $A,B$, we can find vectors in $\Hilb$ with respect to which, an element of $\approxLie$ acts as a Lie polynomial or Lie series in $A,B$. We show this in the following.

\begin{theorem} For any $X\in\approxLie$ and any $n\in\N$, there exists a sequence $\mathfrak{S}_n=\left(Y_m\right)_{m\in\N}$ consisting of Lie polynomials  in $A,B$ such that $X\basvecHilb_n=\sum_{m=0}^\infty Y_m\basvecHilb_n$.
\end{theorem}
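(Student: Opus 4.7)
The plan is to invoke linearity and reduce to the case where $X$ is a single basis element of $\approxLie$ from \eqref{approxLiebasis}, i.e., $X \in \{\algI\} \cup \{A^l, B^l : l \in \Z^+\backslash\{1\}\}$. For each such $X$, I would exhibit a \emph{single} Lie polynomial $Y_1$ with $Y_1\basvecHilb_n = X\basvecHilb_n$ and set $Y_m := 0$ for $m \neq 1$; the ``Lie series'' then degenerates to a single term, but still satisfies the conclusion as stated.

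The key tool is Proposition~\ref{routineProp}\ref{ABdiag2}: the commutator $\LieAB$ acts diagonally on the chosen basis, sending $\basvecHilb_m$ to $q^m\basvecHilb_m$. Combined with the preceding theorem, which identifies $\HeisenLie$ with $\preLie$, this tells us that the basis elements $\LieAB$, $B^l\LieAB$, and $\LieAB A^l$ from \eqref{preLiebasis} are all Lie polynomials in $A, B$ which differ from $\algI$, $B^l$, $A^l$, respectively, only by a known scalar when applied to $\basvecHilb_n$.

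Concretely, for $X = \algI$ I would take $Y_1 := q^{-n}\LieAB$, yielding $Y_1\basvecHilb_n = q^{-n}q^n\basvecHilb_n = \basvecHilb_n$. For $X = B^l$ take $Y_1 := q^{-n} B^l\LieAB$; since the commutator acts first on $\basvecHilb_n$ and contributes the scalar $q^n$, one gets $Y_1\basvecHilb_n = B^l\basvecHilb_n$. For $X = A^l$ with $n \geq l$, take $Y_1 := q^{-(n-l)} \LieAB A^l$: the operator $A^l$ first sends $\basvecHilb_n$ to a scalar multiple of $\basvecHilb_{n-l}$, on which $\LieAB$ scales by $q^{n-l}$, so the normalization gives $Y_1\basvecHilb_n = A^l\basvecHilb_n$. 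In the remaining sub-case $X = A^l$ with $n < l$ one has $A^l\basvecHilb_n = 0$ (because $A\basvecHilb_0 = 0$), and $Y_m := 0$ for all $m$ trivially works.

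The main subtlety, which is really bookkeeping rather than a genuine difficulty, is the ordering of $\LieAB$ relative to the weighted shift: for $A^l$ the commutator must sit on the left, so that it reads off the eigenvalue at the image vector $\basvecHilb_{n-l}$, whereas for $B^l$ it must sit on the right, so that it reads off the eigenvalue at $\basvecHilb_n$ before $B^l$ advances the index. No genuine convergence analysis is needed, since the sequence can always be chosen to have at most one nonzero term; this reflects the broader phenomenon that although the elements of $\approxLie$ are not themselves Lie polynomials in $A, B$, their action on any individual basis vector $\basvecHilb_n$ is indistinguishable from that of a suitably scaled Lie polynomial, because $\LieAB$ contributes only a nonzero scalar on each joint eigenvector.
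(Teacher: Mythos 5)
Your proof is correct and rests on the same key idea as the paper's: insert $\LieAB$ (the paper uses $\LieAB^k$ for an arbitrary fixed $k$) on the side of $A^l$ or $B^l$ where it meets the basis vector of smallest index, rescale by the reciprocal of the eigenvalue $q^{n}$ or $q^{n-l}$ so that the action on $\basvecHilb_n$ is unchanged, and invoke Lemma~\ref{LiePolyLem} (equivalently the identification $\HeisenLie=\preLie$) to conclude that the modified operator is a Lie polynomial. The only difference is that the paper writes $X$ as the sum of a summable sequence and runs an $\varepsilon$-estimate to cover possibly infinite sums, whereas you observe that for $X$ in the (finite) span the series degenerates to finitely many terms, which indeed suffices for the statement as given.
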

\begin{proof} Without loss of generality, we assume that $X$ is the sum of a summable sequence $\left(X_m\right)_{m\in\N}$ on $\approxLie$ such that for each $m$, we have $X_m=c_m Z$ for some scalar $c_m$ and some $Z\in\{B^l,\  A^l\}$ where $l\in\N\backslash\{1\}$. Fix $n\in\N$ and $k\in\Z^+$. For each $m\in\N$, define $Y_m:=c_mq^{-kn}B^l\LieAB^k$ if $Z=B^l$ for some $l$, or $Y_m:=c_mq^{k(l-n)}\LieAB^kA^l$ if $Z=A^l$ for some $l$. By Lemma~\ref{LiePolyLem}, we find that $Y_m$ is a Lie polynomial in $A,B$ for any $m$. By induction on $k,l$, we can obtain from \eqref{adconstruct} the relation $A^l\LieAB^k = q^{kl}\LieAB^kA^l$ for any $k,l\in\Z^+$. Thus, if $Z=A^l$ , then $X_m-Y_m=c_m(A^l-q^{k(l-n)}\LieAB^kA^l)=c_m(A^l-q^{-kn}A^l\LieAB^k)=c_mA^l(\algI-q^{-kn}\LieAB^k)$. A more straightforward computation gives us $X_m-Y_m=c_mB^l(\algI-q^{-kn}\LieAB^k)$ for the case $Z=B^l$. But since $\basvecHilb_n$ is an eigenvector of $\LieAB^k$ for the eigenvalue $q^{kn}$, we have $(\algI-q^{-kn}\LieAB^k)\basvecHilb_n=0$, and hence 
\begin{eqnarray}
(X_m-Y_m)\basvecHilb_n=0.\label{termzero}
\end{eqnarray}
Let $\varepsilon>0$. Since $X$ is the sum of the summable sequence $\left(X_m\right)$, there exists a finite $J_0\subset\N$ such that for any finite $J\subset\N$ disjoint from $J_0$, we have
\begin{eqnarray}
\norm{X-\sum_{m\in J} X_m}<\varepsilon.\label{summable}
\end{eqnarray}
Let $J$ be an index set as previously described. We have 
\begin{eqnarray}
\norm{X\basvecHilb_n-\sum_{m\in J}Y_m\basvecHilb_n} & \leq & \norm{X\basvecHilb_n-\sum_{m\in J}X_m\basvecHilb_n}+\norm{\sum_{m\in J}X_m\basvecHilb_n-\sum_{m\in J}Y_m\basvecHilb_n},\nonumber\\
& \leq & \norm{X-\sum_{m\in J} X_m}\cdot\norm{\basvecHilb_n}\nonumber\\
& & +\sum_{m\in J}\norm{(X_m-Y_m)\basvecHilb_n},\label{triang}
\end{eqnarray}
where $\norm{\basvecHilb_n}=1$. Then by \eqref{termzero}, \eqref{summable}, and \eqref{triang}, we have $$\norm{X\basvecHilb_n-\sum_{m\in J}Y_m\basvecHilb_n}<\varepsilon.$$ Therefore $X\basvecHilb_n=\sum_{m=0}^\infty Y_m\basvecHilb_n$.
\end{proof}

\begin{corollary} Every element of $\Heisen$ is the sum of a Lie polynomial in $A,B$ and of an operator whose action on each basis element $\basvecHilb_n$ of $\Hilb$ can be approximated by that of a Lie series in $A,B$.
\end{corollary}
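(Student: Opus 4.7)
The plan is to derive this corollary immediately from two results already established in the excerpt. First, from Definition~\ref{preLieDef} and the remark directly after it, every $U\in\Heisen$ decomposes as $U=V_1+V_2$ with $V_1\in\preLie$ and $V_2\in\approxLie$, the sum being direct as vector spaces. The first main theorem of Section~\ref{ApproxSec} identifies $\preLie$ with $\HeisenLie$, so $V_1$ is already a Lie polynomial in $A,B$, and there is nothing further to check for the first summand.

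Next I would invoke the preceding theorem verbatim to handle $V_2$: for each fixed $n\in\N$ there exists a sequence $(Y_m)_{m\in\N}$ of Lie polynomials in $A,B$ with
\[
V_2\basvecHilb_n \;=\; \sum_{m=0}^\infty Y_m\basvecHilb_n.
\]
The formal sum $\sum_{m\in\N} Y_m$ qualifies as a Lie series in $A,B$ in the sense of Definition~\ref{LiegenDef} (the allowed index set $J$ may be taken to be $\N$), so the action of $V_2$ on $\basvecHilb_n$ is, by definition, approximated by that of a Lie series. Combining the two descriptions of $V_1$ and $V_2$ gives the stated representation of $U$.

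No real obstacle arises: the corollary is essentially a change of emphasis that repackages the theorem just proved. The only care needed is a short syntactic check that the approximating sequence qualifies as a Lie series under the paper's own definition, and that the vector-space decomposition $\Heisen=\HeisenLie+\approxLie$ used here coincides with the decomposition $\Heisen=\preLie+\approxLie$ from the remark following Definition~\ref{preLieDef}. Both points are immediate from the definitions and from the identification $\preLie=\HeisenLie$, so the proof reduces to a one-line application of each of the two prior results.
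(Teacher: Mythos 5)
Your proposal is correct and matches the paper's (implicit) argument exactly: the paper states this corollary without proof, intending precisely the combination of the direct sum decomposition $\Heisen=\preLie+\approxLie$, the theorem identifying $\preLie=\HeisenLie$, and the preceding approximation theorem for elements of $\approxLie$. Nothing further is needed.
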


We recall here that given a Lie algebra $\mathfrak{K}$, the \emph{derived (Lie) algebra} of $\mathfrak{K}$ is the span of all commutators $\lbrack x,y\rbrack$ for all $x,y\in\mathfrak{K}$. As for the Lie algebra $\preLie$, an immediate consequence of the direct sum decomposition $\preLie=\Span\{A,B\}+\derpreLie$ from Lemma~\ref{LieABlem}\ref{Calkin1} in conjunction with Lemma~\ref{derIdLem} is that the derived algebra of $\preLie$ is $\derpreLie$. By inspection of the basis elements of $\Heisen$ from \eqref{Heisenbasis} in comparison to the defining basis of $\derpreLie$ from the statement of Lemma~\ref{LieABlem}\ref{Calkin1}, we can use Lemma~\ref{LieABlem}\ref{Calkin1} to obtain the following characterization of the derived algebra of $\preLie$ in terms of compactness.

\begin{corollary} The derived algebra of $\preLie$ is precisely the collection of all compact elements of $\Heisen$.
\end{corollary}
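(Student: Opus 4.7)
The plan is to prove the two inclusions separately, with all the substantive content living in the ``compact $\Rightarrow$ derived'' direction. The forward inclusion is immediate: the paragraph just before the corollary identifies the derived algebra of $\preLie$ with $\derpreLie$, and Lemma~\ref{LieABlem}\ref{Calkin1} already states that every basis vector of $\derpreLie$ is compact, so compactness extends by linearity to all of $\derpreLie$. Thus the work reduces to showing that if $X\in\Heisen$ is compact then $X\in\derpreLie$.

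The key observation is that comparing the basis of $\Heisen$ from Proposition~\ref{BasisProp} with the defining basis of $\derpreLie$ in Lemma~\ref{LieABlem}\ref{Calkin1} shows that the vectors in the former but not in the latter are precisely the ones with $k=0$, namely $\algI$, $A^l$, and $B^l$ for $l\in\Z^+$. Hence every $X\in\Heisen$ decomposes uniquely as $X=V+W$ with $V\in\derpreLie$ and $W=c_0\algI+\sum_{i=1}^m c_i A^i+\sum_{j=1}^n e_j B^j$ for some scalars. Assume $X$ is compact; since $V$ is compact by the forward direction already handled, $W=X-V$ is compact as well. Let $\pi$ denote the canonical projection of $\OpAlg$ onto the Calkin algebra and set $D:=\pi(B)$. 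Applying $\pi$ to $W$ and using Proposition~\ref{FredProp}\ref{Calkin3} (which expresses $\pi(A)$ as a scalar multiple of $D^{-1}$) turns the equation $\pi(W)=0$ into a finite linear relation among integral powers of $D$. By Corollary~\ref{LaurentCor} the integral powers of $D$ are linearly independent in $\pi(\Heisen)$, so every coefficient $c_0,c_i,e_j$ must vanish, forcing $W=0$ and $X=V\in\derpreLie$.

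The step that carries the essential content, and that I would flag as the main obstacle were it not already done upstream, is the linear independence of the powers of $D$ in Corollary~\ref{LaurentCor}: it is precisely what prevents the $\algI$, $A^l$, $B^l$ pieces of a compact operator from being absorbed into nontrivial cancellations. With that independence, and with the decomposition $\Heisen=\derpreLie\oplus\Span\{\algI,A^l,B^l:l\in\Z^+\}$ read off from the two bases, the proof is just the assembly of Lemma~\ref{LieABlem}\ref{Calkin1}, Proposition~\ref{FredProp}\ref{Calkin3}, and Corollary~\ref{LaurentCor}.
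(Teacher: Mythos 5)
Your proof is correct and follows the paper's own approach: the forward inclusion is Lemma~\ref{LieABlem}\ref{Calkin1} plus the identification of the derived algebra with $\derpreLie$, and your converse argument (decompose off the $k=0$ part, push through $\pi$, and invoke the linear independence of the powers of $D$ from Corollary~\ref{LaurentCor}) is precisely the device the paper itself deploys in the proof of Lemma~\ref{derIdLem}. The paper leaves this converse implicit in the paragraph preceding the corollary, so your writeup simply makes explicit what the author gestures at.
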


\section*{Acknowledgements} The author acknowledges support from the International Mathematical Union (IMU-CDC), from M\"alardalen University, V\"aster\aa s, Sweden, and also from De La Salle University, Manila.

\appendix

\section{Proof of Proposition~\ref{diamondProp}}

Our proof shall make use of the \emph{Diamond Lemma for Ring Theory} \cite[Theorem 1.2]{Berg}, and so we shall recall some related terminology. We introduce each notion within the context of an arbitrary associative algebra $\mathcal{A}$ that is finitely generated, with generators $G_1,G_2,\ldots, G_t$. For each notion that is introduced, we show a particular manifestation in $\Heisen$ of the notion that shall be relevant in the proof. By a \emph{word} on $G_1,G_2,\ldots, G_t$, we mean a finite product $X_1X_2\cdots X_k$ where $X_i\in\{G_1,G_2,\ldots, G_t\}$ for any $i\in\{1,2,\ldots, k\}$. If $k=0$, we interpret the empty product as $\algI$, which we call the \emph{empty word}. By a \emph{subword} of a word $X_1X_2\cdots X_k$ we mean either the empty word or a word of the form $X_aX_{a+1}\cdots X_b$ for some $a,b\in\{1,2,\ldots,k\}$ with $a\leq b$. We now look at what kind of words to consider in $\Heisen$. Let $C:=AB-BA=\LieAB$. Observe that $\Heisen$ is generated by $A,B,C$, and in this proof we shall consider words on $A,B,C$. From \cite[Lemma 3.1]{Hel05}, we have the relations
\begin{eqnarray}
A\LieAB = q\LieAB A,\quad\LieAB B=qB\LieAB.\label{adconstruct}
\end{eqnarray}
Using the defining relation $AB-qBA=\algI$ of $\Heisen$, the relation $C=AB-BA$ and \eqref{adconstruct}, we have the relations 
\begin{eqnarray}
BA & = & \frac{\algI-C}{1-q},\label{redsys1}\\
AB & = & \frac{\algI-qC}{1-q},\label{redsys2}\\
AC & = & qCA,\label{redsys3}\\
CB & = & qBC.\label{redsys4}
\end{eqnarray}
 A set of relations like \eqref{redsys1} to \eqref{redsys4} in which each left-hand side is a word is called a \emph{reduction system}. For later convenience, let us denote the left-hand sides of \eqref{redsys1} to \eqref{redsys4} by $L_1,L_2,L_3, L_4$, respectively, and the corresponding right-hand sides by $g_1,g_2,g_3,g_4$. Suppose that $W_1=f_1$, $W_2=f_2$, $\ldots$ , $W_n=f_n$ is a reduction system in $\mathcal{A}$. For convenience, let us call this arbitrary reduction system $\mathcal{R}$. A word $W$ on $G_1,G_2,\ldots,G_t$ is said to be \emph{irreducible} with respect to $\mathcal{R}$ if none of the left-hand sides $W_1,W_2,\ldots,W_n$ of the relations in the reduction system $\mathcal{R}$ is a subword of $W$. It is routine to show that a word $W$ on $A,B,C$ is irreducible with respect to the reduction system \eqref{redsys1} to \eqref{redsys4} if and only if $W$ is one of the words
\begin{eqnarray}\label{Heisenbasisproof}
C^k,\quad C^k A^l,\quad B^{l}C^k, \quad (k\in\N,\   l\in\Z^+),
\end{eqnarray}
which are exactly the basis elements of $\Heisen$ in \eqref{Heisenbasis}. Let $L,R$ be words on $G_1,G_2,\ldots,G_t$ and let $i\in\{1,2,\ldots,n\}$. By a \emph{reduction} we mean a linear transformation $r=r(L,i,R)$ of $\mathcal{A}$ into itself that fixes every word except the word $LW_iR$, and instead performs the mapping $LW_iR\mapsto Lf_iR$. Observe that since any relation $W_i=f_i$ in $\mathcal{R}$ is an equality, if $\rho$ is any composition of a finite number of reductions, then $\rho(X)=X$ for any $X\in\mathcal{A}$. A word $W$ on $G_1,G_2,\ldots,G_t$ is an \emph{overlap ambiguity} of $\mathcal{R}$ if $W=V_1V_2V_3$ for some nonempty words $V_1, V_2, V_3$ such that each of the words $V_1V_2$ and $V_2V_3$ is among the left-hand sides $W_1,W_2,\ldots,W_n$, say $W_i=V_1V_2$ and $W_j=V_2V_3$ for some $i,j\in\{1,2,\ldots,n\}$. An overlap ambiguity of the form previously shown is said to be \emph{resolvable} if there exist finite compositions $\rho,\rho'$ of reductions such that $\rho(f_iV_3)=\rho'(V_1f_j)$. In here, we consider the identity linear map $\iota$ as the empty composition of reductions. In the reduction system \eqref{redsys1} to \eqref{redsys4}, the words
\begin{eqnarray}
BAB, \quad ABA, \quad BAC, \quad CBA,\quad  ACB,\label{iamb}
\end{eqnarray}
are precisely all the inclusion ambiquities. Consider the inclusion ambiguity $BAB$. It is routine to show that the reductions $\rho:=r(\algI,4,\algI)$ and $\rho':=\iota$ have the property $\rho(g_1B)=\rho'(Bg_2)$. Thus, the inclusion ambiguity $BAB$ is resolvable. It is routine to show that the other inclusion ambiguities in \eqref{iamb} are all resolvable. A word $W$ on $G_1,G_2,\ldots,G_t$ is an \emph{inclusion ambiguity} of $\mathcal{R}$ if $W=U_1W_iU_2$ for some words $U_1,U_2$ and some $i\in\{1,2,\ldots,n\}$ such that $W=W_j$ for some $j\in\{1,2,\ldots,n\}\backslash\{i\}$. An inclusion ambiguity of the form previously described is said to be \emph{resolvable} if there exist finite compositions $\rho,\rho'$ of reductions such that $\rho(U_1f_iU_2)=\rho'(f_j)$. We find that the reduction system \eqref{redsys1} to \eqref{redsys4} has no inclusion ambiguity. We are now ready to invoke the Diamond Lemma \cite[Theorem 1.2]{Berg}, which states that if all the ambiguities of a reduction system are resolvable, then every element of the algebra is expressible as a unique linear combination of irreducible words after the application of a finite number of reductions. Let $f,g$ be any two elements of $\Heisen$ among \eqref{Heisenbasisproof}. Since all the ambiguities of \eqref{redsys1} to \eqref{redsys4} are resolvable, then there exists a composition 
\begin{eqnarray}
\sigma = r_1\circ r_2\circ\cdots\circ r_m\label{sigmared}
\end{eqnarray}
of reductions such that $\sigma(fg)$ is a unique linear combination of \eqref{Heisenbasisproof}. Observe that $fg$ is only a finite sequence of factors from the finite set $\{A,B,C\}$. From the appearance of the right-hand sides of \eqref{redsys1} to \eqref{redsys4}, the reduction $r_m$ shall replace at most one factor of $fg$ by a finite linear combination. The same is true for all the other reductions appearing in \eqref{sigmared}, in which we only have a finite number of such reductions. Therefore, $\sigma(fg)=fg$ is a finite linear combination of \eqref{Heisenbasisproof}. This completes the proof of Proposition~\ref{diamondProp}.

% ------------------------------------------------------------------------
\end{document}